\newtheorem{lemma}[subsection]{Lemma}
\newtheorem{theorem}[subsection]{Theorem}
\newtheorem*{theorem-nonum}{Theorem}
\newtheorem{corollary}[subsection]{Corollary}
\newtheorem{claim}[subsection]{Claim}
\newtheorem{definition}[subsection]{Definition}
   {\refstepcounter{subsection}%
        \medbreak\noindent{\bf Question \thesubsection\space}}%
   {\par\medbreak}%
   {\refstepcounter{subsection}%
        \medbreak\noindent{\bf Conjecture \thesubsection\space}}%
   {\par\medbreak}%
\newenvironment{remark}%
   {\refstepcounter{subsection}%
        \medbreak\noindent{\bf Remark \thesubsection\space}}%
   {\par\medbreak}%
   {\refstepcounter{subsection}%
        \medbreak\noindent{\bf Example \thesubsection\space}}%
   {\par\medbreak}%
\newenvironment{proof}%
   {\medbreak\noindent{\it Proof:\space}}%
   {\par\noindent\vrule height 5pt width 5pt depth 0pt\smallbreak}%
\newcommand{\df}{\bf}
\let\sauvegardetiret=\-
\renewcommand{\-}[1]{\ifx#1-\penalty10000\hbox{-\relax}\penalty10000\else\sauvegardetiret#1\fi}
\newcommand{\tq}{\mathrel{:}}
\newcommand{\NN}{{\mathbf N}}
\newcommand{\cA}{{\cal A}}
\newcommand{\cC}{{\cal C}}
\newcommand{\cI}{{\cal I}}
\newcommand{\cL}{{\cal L}}
\newcommand{\Card}{\mathop{\rm Card}}
\newcommand{\topdim}{\mathop{\rm topdim}}
\newcommand{\Int}{\mathop{\rm Int}\nolimits}
\newcommand{\rk}{\mathop{\rm rk}}
\newcommand{\Th}{\mathop{\rm Th}}
\newcommand{\HM}[1]{(HM$_{#1}$)}
\title{Topological cell decomposition and dimension theory in $P$-minimal fields}
\author{
    Pablo Cubides-Kovacsics\footnote{Laboratoire Paul Painlev\'e,
    Universit\'e de Lille~1, CNRS U.M.R.~8524, 59655~Villeneuve d'Ascq
    Cedex, France. }
    \and
    Luck darni\`ere\footnote{LAREMA, Universit\'e d'Angers, 2~Bd
    Lavoisier,  49045~Angers Cedex~01, France.}
    \and
    Eva Leenknegt\footnote{Department of Mathematics, KULeuven,
    Celestijnenlaan~200B, 3001 Heverlee, Belgium.}
}
\begin{document}

\maketitle

\begin{abstract}
  This paper addresses some questions about dimension theory for
  $P$-minimal structures. We show that, for any definable set A, the
  dimension of $\overline{A}\backslash A$ is strictly smaller than the
  dimension of $A$ itself, and that $A$ has a decomposition into
  definable, pure-dimensional components. This is then used to show
  that the intersection of finitely many definable dense subsets of
  $A$ is still dense in $A$. As an application, we obtain that any
  definable function $f: D \subseteq K^m \to K^n$ is continuous on a dense,
  relatively open subset of its domain $D$, thereby answering a
  question that was originally posed by Haskell and Macpherson. 

   In order to obtain these results, we show that $P$-minimal structures
   admit a type of cell decomposition, using a topological
   notion of cells inspired by real algebraic geometry.
 \end{abstract}

\section{Introduction}

Inspired by the successes of $o$-minimality \cite{drie-1998} in real
algebraic geometry, Haskell and Macpherson \cite{hask-macp-1997} set
out to create a $p$-adic counterpart, a project which resulted in the
notion of $P$-minimality. One of their achievements was to build a
theory of dimension for definable sets which is in many ways similar
to the $o$-minimal case. Still, some questions remained open.

The theorem below  is one of the main results of this paper. It gives
a positive answer to one of the questions raised at the end of their
paper (Problem 7.5). We will assume $K$ to be a $P$-minimal expansion
of a $p$-adically closed field with value group $|K|$. When we say
definable, we mean definable (with parameters) in a $P$-minimal
structure.
\begin{theorem-nonum}[Quasi-Continuity] 
  Every definable function $f$ with domain $X\subseteq K^m$ and values in
  $K^n$ (resp. $|K|^n$) is continuous on a definable set $U$ which is
  dense and open in $X$, and $\dim (X\setminus U)<\dim U$.
\end{theorem-nonum}

Haskell and Macpherson already included a slightly weaker version of
the above result in Remark~5.5 of their paper \cite{hask-macp-1997},
under the additional assumption that $K$ has definable Skolem
functions. However, they only gave a sketch of the proof, leaving out
some details which turned out to be more subtle than expected. The
authors agreed with us that some statements in the original proof
required further clarification. 

One of the motivations for writing this paper was to remedy this, and
also to show that the assumption of Skolem functions could be removed.
This seemed worthwhile given that the result had already proven to be
a useful tool for  deducing other properties about the dimension of
definable sets in $P$-minimal structures. For example, in
\cite{kuij-leen-2015} one of the authors showed how the
Quasi-Continuity Theorem would imply the next result.  

\begin{theorem-nonum}[Small Boundaries] 
  Let $A$ be a non-empty definable subset of $K^m$.
  Then it holds that $\dim(\overline{A}\setminus A)<\dim A$. 
\end{theorem-nonum}
That both theorems are very much related is further illustrated by the
approach in this paper: we will first prove the Small Boundaries
Property, and use it to derive the Quasi-Continuity Property. The tool
used to prove these results is  a `topological cell decomposition',
which we consider to be the second main result of this paper.

\begin{theorem-nonum}[Topological Cell Decomposition]
  For every definable function $f$ from $X\subseteq K^m$ to $K^n$ (resp.
  $|K|^n$) there exists a good t\--cell decomposition $\cA$ of $X$, such
  that for every $A\in\cA$, the restriction $f_{|A}$ of $f$ to $A$ is
  continuous.
\end{theorem-nonum}
The notions of `t\--cell' and `good t-cell decomposition' were
originally introduced by Mathews, whose paper \cite{math-1995} has
been  a major source of inspiration for us. They are analogous to a
classical notion of cells coming from real algebraic geometry (see for
example the definition of cells in \cite{boch-cost-roy-1987}).  Exact
definitions will be given in the next section. 
\\\\
By now, there exist many cell decomposition results for P-minimal
structures, which can be quite different in flavour, depending on
their aims and intended level of generality. Historically, the most
influential result is probably Denef's cell decompositon for
semi-algebraic sets \cite{dene-1986} (which in turn was inspired by
Cohen's work \cite{cohe-1969}). This has inspired adaptations to the
sub-analytic context by Cluckers \cite{cluc-2004}, and to multi-sorted
semi-algebraic structures by Pas \cite{pas-1990}. Results like
\cite{cubi-leen-2015, mour-2009, darn-halu-2015-tmp} give
generalizations of Denef-style cell decomposition. Note that full
generality is hard to achieve: whereas \cite{cubi-leen-2015} works for
all $p$-minimal structures without restriction, it is somewhat weaker
than these more specialized results. On the other hand,
\cite{mour-2009, darn-halu-2015-tmp} are closer to the results cited
above, but require some restrictions on the class of $P$\--minimal
fields under consideration. A somewhat different result is the
Cluckers-Loeser cell decomposition \cite{cluc-loes-2007} for
$b$-minimal structures.

Each of these decompositions has its own strengths and weaknesses. The
topological cell decomposition proposed here seems to be the best for
our purposes, since it is powerful enough to fill the remaining
lacunas in the dimension theory of definable sets over $P$\--minimal
fields, without restriction.

\
  
The rest of this paper will be organized as follows. In
section~\ref{se:notation}, we recall some definitions and known
results, and we set the notation for the remaining sections. In
section~\ref{se:cell-dec}, we will prove the t\--cell decomposition
theorem (Theorem~\ref{th:M-cell-prep}) and deduce the Small Boundaries
Property (Theorem~\ref{th:dim-boundary}) as a corollary. Finally, in
section~\ref{se:pure-dim}, we prove the Quasi-continuity Property
(Theorem~\ref{th:dense-cont}). The key ingredient of this proof is the
following result (see Theorem~\ref{th:dense-int}), which is also
interesting in its own right.

\begin{theorem-nonum}
  Let $A_1,\dots,A_r\subseteq A$ be a family of definable subsets of $K^m$. If the
  union of the $A_k$'s has non empty interior in $A$ then at least one
  of them has non empty interior in $A$. 
\end{theorem-nonum}

Note that the above statement shows that, if $B_1,\dots,B_r$ are definable
subsets which are dense in $A$, then their intersection $B_1\cap\cdots\cap B_r$
will also be dense in $A$. Indeed, a definable subset is dense in $A$
if and only if its complement in $A$ has empty interior inside $A$.

\paragraph{Acknowledgement} 
The authors would like to thank Raf Cluckers for encouraging this
collaboration and for helpful discussions. The research leading to
these results has received funding from the European Research Council,
ERC Grant nr. 615722, MOTMELSUM, 2014--2019. During the preparation of
this paper, the third author was a Postdoctoral Fellow of the Fund
for Scientific Research - Flanders (FWO)

\section{Notation and prerequisites}
\label{se:notation}

Let $K$ be a $p$-adically closed field, i.e., elementarily equivalent
to a $p$-adic field,  and $K^*=K\setminus\{0\}$. We use multiplicative
notation for the $p$\--valuation, which we then denote by $|\,.\,|$ so
$|ab|=|a||b|$, $|a+b|\leqslant\max |a|,|b|$, and so on\footnote{
Compared with additive notation this reverses the order:
$|a|\leqslant|b| \Leftrightarrow v(b)\leqslant v(a)$.}\!. For every set $X\subseteq K$ we
will use the notation $|X|$ for the image of $X$ by the valuation.  
A natural way to  extend the valuation  to $K^m$ is by putting 
\[\|(x_1,\dots,x_m)\|:=\max_{i\leqslant m}\{|x_i|\}.\] 
This induces a topology, with balls 
\[B(x,\rho):=\{y\in K^m\tq\|x-y\|<\rho\}\]
as basic open sets, where $x\in K^m$ and $\rho\in |K^*|$. For every $X\subseteq
K^m$, write $\overline{X}$ for the closure of $X$ and $\Int X$ for the
interior of $X$ (inside $K^m$). The relative interior of a subset $A$
of $X$ inside $X$, that is $X\setminus\overline{X\setminus A}$, is denoted $\Int_X A$.

Let us now recall the definition of $P$-minimality:
\begin{definition}
  Let $\mathcal{L}$ be a language extending the ring language
  $\mathcal{L}_{\text{ring}}$. A structure $(K, \mathcal{L})$ is said
  to be $P$-minimal if, for every structure $(K', \mathcal{L})$
  elementarily equivalent to $(K,\mathcal{L})$, the
  $\mathcal{L}$-definable subsets of $K'$ are semi-algebraic.
\end{definition}
In this paper, we always work in a $P$-minimal structure
$(K,\cL)$. Abusing notation, we simply denote it as $K$.
The word definable means definable using parameters in $K$.
A set $S \subseteq K^m\times|K|^n$ is said to be definable if the
related set $\{(x,y) \in K^m\times K^n \tq (x,|y_1|, \ldots |y_n|)\in S\}$ is
definable. 

A function $f$ from $X\subseteq K^m$ to $K^n$ (or to $|K|^n$) is definable if
its graph is a definable set. For every such function, let $\cC(f)$
denote the set
\begin{displaymath} 
  \cC(f):=\big\{a\in X\tq f\mbox{ is continuous on a neighbourhood of $a$
  in }X\big\}.
\end{displaymath}
It is easy to see that this is a definable set.

We will use the following notation for the fibers of a set. For any
set $S\subseteq K^{m}$, the subsets $I = \{i_1, \ldots, i_r\}$ of $\{1, \ldots,
m\}$ induce  projections $\pi_I:K^m\to K^r$ (onto the coordinates listed in
$I$) . Given an element $y\in K^r$, the fiber $X_{y,I}$ denotes the set
$\pi_I^{-1}(y)\cap X$. In most cases, we will drop the sub-index $I$ and
simply write $X_y$ instead of $X_{y,I}$ when the projection $\pi_I$ is
clear from the context. In particular, when $S\subseteq K^{m+n}$ and $x\in K^m$,
we write $S_x$ for the fiber with respect to the projection onto the
first $m$ coordinates.  
\\\\
One can define a strict order on the set of non-empty definable
subsets of $K^m$, by putting
\begin{center}
  $B\ll A$ \quad $\Leftrightarrow$ \quad $B\subseteq A$ and $B$ lacks interior in $A$.
\end{center} 
The rank of $A$ for this order is denoted $D(A)$. It is defined by
induction: $D(A)\geqslant 0$ for every non-empty set $A$, and
$D(A)\geqslant d+1$ if there is a non-empty definable set $B\ll A$ such
that $D(B)\geqslant d$. Then $D(A)=d$ if $D(A)\geqslant d$ and
$D(A)\ngeqslant d+1$. By convention $D(\emptyset)=-\infty$. 
\\\\
The notion of dimension used by Haskell and Macpherson in
\cite{hask-macp-1997} (which they denoted as $\topdim A$) is defined
as follows:
\begin{definition} \label{def:dimension}
  The \textbf{dimension} of a set $A \subset K^m$ (denoted as $\dim A$) is
  the maximal integer $r$ for which there exists a subset $I$ of
  $\{1,\dots,m\}$ such that $\pi_I^m(A)$ has non-empty interior in $K^r$,
  where $\pi_I^m:K^m\to K^r$ is defined by
  \begin{displaymath}
    \pi_I^m:(x_1,\dots,x_m)\mapsto (x_{i_1},\dots,x_{i_r}) 
  \end{displaymath}
  with $i_1<\cdots<i_r$ an enumeration of $I$.
\end{definition}
We will omit the super-index $m$ in $\pi_I^m$ when it is clear from the
context, and put $\dim\emptyset=-\infty$. Given a set $S\subseteq K^{m+1}$,
$\pi^{m+1}_{\{1,\dots,m\}}(S)$ is simply denoted $\widehat{S}$. 

Note that by $P$-minimality, if $A \subseteq K^m$ is a definable set
and $\dim A =0$, then $A$ is a finite set. Also, $\dim A = m$ if and
only if $A$ has non-empty interior.  
\\\\
Let us now recall some of the properties of this dimension that were
already proven by Haskell and Macpherson in \cite{hask-macp-1997}:
{\sl
  \begin{description}
    \item[\HM {\bf 1}]
      Given definable sets $A_1,\dots,A_r\subseteq K^m$, it holds that $\dim
      A_1\cup\cdots\cup A_r=\max(\dim A_1,\dots,\dim A_r)$. (Theorem~3.2)
    \item[\HM {\bf 2}] 
      For every definable function $f:X\subseteq K^m\to |K|$, $\dim X\setminus\cC(f)<m$.
      (Theorem~3.3 and Remark~3.4 (rephrased))
    \item[\HM {\bf 3}]
      For every definable function $f:X\subseteq K^m\to K$, $\dim X\setminus\cC(f)<m$.
      (Theorem~5.4)
      \end{description}}

\noindent
Recall that a complete theory $T$ satisfies the  Exchange Principle if
the model-theoretic algebraic closure for $T$ does so. In every model
of a theory satisfying the Exchange Principle, there is a well-behaved
notion of dimension for definable sets, which is called model
theoretic rank. Haskell and Macpherson showed that
{\sl
  \begin{description}
    \item[\HM {\bf 4}]
      The model-theoretic algebraic closure for $\Th(K)$ satisfies the
      Exchange Principle. (Corollary~6.2)
    \item[\HM {\bf 5}] 
      For every definable $X\subseteq K^m$, $\dim X$ coincides with the
      model-theoretic $\rk X$. (Theorem~6.3)
  \end{description}
}

The following Additivity Property (Lemma~\ref{le:additivity} below) is
known to hold for the model theoretic rank $rk$, in theories
satisfying the exchange principle. For a proof, see Lemma~9.4 in
\cite{math-1995}. Hence, theorems \HM 4 and \HM 5 imply that $\dim$
also satisfies the Additivity Property. This fact was not explicitly
stated by Haskell and Macpherson in \cite{hask-macp-1997}, and seems
to have been somewhat overlooked until now. It plays a crucial role in
our proof of Theorem~\ref{th:M-cell-prep}, hence in all our paper.

\begin{lemma}[Additivity Property]\label{le:additivity}
  Let $S \subseteq K^{m+n}$ be a definable set. For $d \in \{ -\infty, 0,1, \ldots,
  n\}$, write $S(d)$ for the set 
  \[ S(d) := \{ a \in K^m \tq \dim S_a = d \}.\] 
  Then  $S(d)$ is definable and 
  \[ \dim \bigcup_{a \in S(d)} S_a= \dim(S(d)) + d.\]
\end{lemma}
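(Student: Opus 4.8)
The plan is to reduce everything to the case $n=1$ and then apply it iteratively. So first I would prove: for a definable $S \subseteq K^{m+1}$, with $S(1) = \{a : \dim S_a = 1\}$ (equivalently $S_a$ infinite) and $S(0) = \{a : S_a$ finite and non-empty$\}$, one has $\dim \bigcup_{a\in S(1)} S_a = \dim S(1) + 1$ and $\dim\bigcup_{a\in S(0)} S_a = \dim S(0)$. Definability of $S(d)$ is immediate from $P$-minimality (being finite, and of each given finite cardinality, is a definable condition on fibers, uniformly). The $d=0$ case is the more substantial one: we must show that a definable set $T \subseteq K^{m+1}$ all of whose fibers $T_a$ over $\widehat T$ are finite and non-empty satisfies $\dim T = \dim \widehat T$. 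The inequality $\dim T \geq \dim\widehat T$ is easy, since $\widehat T = \pi^{m+1}_{\{1,\dots,m\}}(T)$ and projections do not increase dimension (this follows from \HM 1 together with Definition~\ref{def:dimension}, or directly). For the reverse inequality $\dim T \leq \dim \widehat T$, I would invoke \HM 5: the model-theoretic rank is additive in the finite-fibers situation because $\rk$ is well-behaved under the Exchange Principle (\HM 4), and a point lying over $a$ in a finite fiber is algebraic over $a$, so it adds nothing to the rank. This is exactly the content referenced as Lemma~9.4 of \cite{math-1995}; the honest way to say it is that $\dim = \rk$ and $\rk$ has the additivity property in any theory with the Exchange Principle. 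The $d=1$ case then follows by writing $\bigcup_{a\in S(1)} S_a$ as the union of a "graph-like" part handled by the $d=0$ argument and a part with non-empty interior in each fibre, or more cleanly: intersect with $S(1)\times K$, note $\dim\le \dim S(1)+1$ trivially from Definition~\ref{def:dimension}, and for the lower bound use that a definable subset of $K$ that is infinite has non-empty interior ($P$-minimality), so locally $S$ contains a box, giving $\dim \geq \dim S(1) + 1$.

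For the general $n$, I would argue by induction on $n$, the case $n=1$ being the above. Write $K^{m+n} = K^{m+(n-1)} \times K$ and let $\pi : K^{m+n}\to K^{m+n-1}$ be the projection forgetting the last coordinate. For $a \in K^m$, decompose $S_a \subseteq K^n$ according to the dimension $e$ of its fibers over $\pi$ restricted to $S_a$: by the $n=1$ base case applied fiberwise, $\dim S_a = \dim \pi(S_a) + 1$ if a positive-dimensional set of points of $\pi(S_a)$ has one-dimensional $S_a$-fibers, and $\dim S_a = \dim\pi(S_a)$ otherwise — more precisely $\dim S_a$ is the max over $e\in\{0,1\}$ of $\dim((\pi(S_a))(e)) + e$. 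The key point is that all of this is uniform in $a$: the sets $\{(a,b) : \dim (S_a)_b = e\}$ are definable (again by $P$-minimality, being a statement about finiteness of fibers), so I can split $S$ into finitely many definable pieces on each of which the "fiber dimension profile" is constant, apply the induction hypothesis to the projection $\pi(S) \cap (\cdots)$ in $K^{m+n-1}$, then the base case to pass from $\pi(S)$ back to $S$, and finally reassemble using \HM 1 (which says dimension of a finite union is the max, so the bookkeeping of the various pieces causes no loss).

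The main obstacle I anticipate is the careful bookkeeping in the inductive step: one has to track, simultaneously, the dimension $d$ of a fiber $S_a \subseteq K^n$ and how that $d$ is "built up" coordinate by coordinate, making sure the partition of $S(d)$ according to these intermediate fiber-dimension data is definable and that \HM 1 can be applied to recombine the pieces without the maximum jumping in the wrong direction. The genuinely non-elementary input — the finite-fiber additivity $\dim T = \dim\widehat T$ — I would simply quote via \HM 4, \HM 5 and Lemma~9.4 of \cite{math-1995}, since reproving additivity of model-theoretic rank from scratch would be a digression; the real work of this lemma, given those, is organizing the reduction to $n=1$ cleanly. An alternative, possibly slicker route avoiding the induction entirely would be to simply cite that $\dim = \rk$ (\HM 5) and that the stated additivity formula is a standard property of $\rk$ in Exchange-Principle theories (Lemma~9.4 of \cite{math-1995} in full generality, not just $n=1$); I would likely present the proof in this short form and relegate the coordinate-by-coordinate argument to a remark, since the paper already flags that this fact "was not explicitly stated" before and merely wants it on record.
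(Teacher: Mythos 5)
Your preferred short form is exactly what the paper does: it offers no proof of this lemma at all, merely observing in the preceding paragraph that additivity is known for the model-theoretic rank in theories satisfying the Exchange Principle (Lemma~9.4 of \cite{math-1995}), and that \HM 4 and \HM 5 transfer this property to $\dim$. Your longer coordinate-by-coordinate reduction is extra work the paper does not attempt (and, as written, its lower bound in the $d=1$ case would still need a uniformity or definable-choice argument to get a box of definite size over a full-dimensional subset of $S(1)$, rather than fibre-wise boxes of varying radius), but since you ultimately quote the same external input, the approaches coincide.
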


\noindent 
Combining this with the first point \HM1, it follows easily that
$\dim$ is a dimension function in the sense of van den Dries
\cite{drie-1989}.
\\\\
Haskell and Macpherson also proved that $P$-minimal structures are
\textbf{model-theoretically bounded} (also known as ``algebraically
bounded'' or also that ``they eliminate the $\exists^\infty$ quantifier''),
\textit{i.e.}, for every definable set $S\subseteq K^{m+1}$ such that all the
fibers of the projection of $S$ onto $K^m$ are finite, there exists an
integer $N\geqslant 1$ such that all of them have cardinality
$\leqslant N$. 
\\\\
While it is not known whether general $P$-minimal structures admit
definable Skolem functions, we do have the following weaker version
for coordinate projections with finite fibers.

\begin{lemma}
  Let $S \subseteq K^{m+1}$ be a definable set.
  Assume that all fibers $S_x$ with respect to the projection onto the
  first $m$ coordinates are finite. Then there exists a definable
  function $\sigma: \widehat{S} \to K^{m+1}$  such that $\sigma(x)\in S$ for every
  $x \in \widehat{S}$.
\end{lemma}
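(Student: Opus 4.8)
The plan is to reduce, via model-theoretic boundedness, to the case where all fibres have the same size, and then to peel off a proper definable subfamily at each step, decreasing the fibre size until $S$ becomes a graph. By model-theoretic boundedness there is an integer $N$ with $\Card S_x\leqslant N$ for every $x\in\widehat S$, and for each $k$ the set $X_k:=\{x\in\widehat S\tq\Card S_x=k\}$ is definable; since it suffices to build $\sigma$ separately over each $X_k$, we may assume $\Card S_x=k$ for all $x\in\widehat S$, with $k$ fixed. We induct on $k$. If $k=1$ then $S$ is the graph of a definable function and $\sigma$ exists. For $k\geqslant2$, it is enough to exhibit a definable $S'\subseteq S$ with $\emptyset\neq S'_x\subsetneq S_x$ for all $x$: splitting $\widehat S$ according to the (constant) value $\Card S'_x<k$ and applying the induction hypothesis to $S'$ then gives the required section (noting that $S'_x\neq\emptyset$ for all $x$, so $\widehat{S'}=\widehat S$).

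To produce $S'$, set $\mu(x):=\max\{|t|\tq t\in S_x\}$, $T_x:=\{t\in S_x\tq|t|=\mu(x)\}$ and $\delta(x):=\max\{|t-t'|\tq t,t'\in S_x\}$; these are definable uniformly in $x$, and $\delta(x)\leqslant\mu(x)$ always. Partition $\widehat S$ into three definable pieces. On $\{x\tq T_x\subsetneq S_x\}$, let $S'_x:=T_x$. On $\{x\tq T_x=S_x,\ \delta(x)=\mu(x)\}$ — where every fibre element has absolute value $\mu(x)$ and some two of them are at distance $\mu(x)$ — the relation $|t-t'|<\mu(x)$ partitions $S_x$ into $\ell\geqslant2$ ``residue balls'' of size $<k$, and we let $S'_x$ be one of them, chosen uniformly in $x$ as described below. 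On the last piece $\{x\tq T_x=S_x,\ \delta(x)<\mu(x)\}$, recentre: put $c(x):=\frac1k\sum_{t\in S_x}t$ (definable, since $k$ is fixed and $K$ has characteristic $0$), replace $S$ by $\{(x,t-c(x))\tq(x,t)\in S\}$, and observe that recentring leaves the pairwise differences — hence $\delta(x)$ — unchanged while forcing every new fibre element to have absolute value $\leqslant\delta(x)<\mu(x)$, and that the new fibre is balanced (its elements sum to $0$). Consequently its maximal-absolute-value subset is either proper (so we are in the first piece) or equal to the whole fibre, in which case the common absolute value is necessarily $\delta(x)$ and the fibre splits into $\geqslant2$ residue balls (the second piece); peeling there and translating back by $c(x)$ yields $S'$. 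This covers all cases, so the induction on $k$ terminates.

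It remains to select one residue ball uniformly in $x$ when $S_x$ splits into $\ell\geqslant2$ of them. For $t,t'\in S_x$ of absolute value $\mu(x)$, the unit $t/t'$ has absolute value $1$, and its residue in the (finite) residue field of $K$ depends only on the residue balls containing $t$ and $t'$; fixing once and for all a $\emptyset$-definable enumeration of the nonzero residues of $K$ (available because $K$ is $p$-adically closed), these residues let us compare residue balls and pick a distinguished one definably and uniformly in $x$. This selection step is the crux of the argument: the combinatorial bookkeeping above is routine, but making the choice of residue ball genuinely uniform, and well-defined in the ``antipodal'' configurations where two balls carry reciprocal residues, is where the proof has to be handled with care.
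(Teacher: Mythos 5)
Your overall skeleton is sound, and it is a genuinely different route from the paper: the paper simply invokes Lemma~7.1 of Denef \cite{dene-1984} (which proves definable selection under a uniform bound on the fibers) together with model-theoretic boundedness to supply that bound, whereas you attempt to reprove the selection statement from scratch. The reduction to constant fiber size, the induction on $k$, and the case $T_x\subsetneq S_x$ are all fine. But the proof breaks at exactly the point that makes Denef's lemma nontrivial, namely definable choice inside a fiber all of whose elements look alike.

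There are two concrete gaps. First, the recentering step is false when $p\mid k$: from $t-c(x)=\frac1k\sum_{t'}(t-t')$ you only get $|t-c(x)|\leqslant|1/k|\,\delta(x)$, and $|1/k|>1$ when $p\mid k$. Take $K=\QQ_2$, $k=2$, $S_x=\{1,3\}$: here $\mu(x)=1$, $T_x=S_x$, $\delta(x)=|2|<1$, so you recentre by $c(x)=2$ and obtain $\{-1,1\}$. This new fiber satisfies none of your three exits: its maximal-absolute-value subset is the whole fiber, the common absolute value is $1\neq\delta(x)$, and since $|1-(-1)|=|2|<1=\mu$ the two elements lie in a \emph{single} residue ball, so there is no splitting and no further recentering (the fiber is already balanced). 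The case analysis does not terminate. Second, even where the fiber does split into $\ell\geqslant 2$ residue balls, the selection you sketch is not carried out: the assignment of residues to balls via $t/t'$ depends on the choice of the base ball, and in the configuration $S_x=\{a,-a\}$ (with $p$ odd) each of the two balls sees the same residue set $\{1,-1\}$ from its own viewpoint, so your comparison cannot distinguish them. You flag this ``antipodal'' problem yourself, but flagging it is not resolving it, and it is precisely the hard content of the lemma. As written, the argument therefore fails for something as simple as a two-point fiber $\{a,-a\}$; you would need either Denef's actual mechanism (selection of roots using the finiteness of $K^*/(K^*)^n$) or some other device to close these cases.
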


\begin{proof}
In Lemma 7.1 of \cite{dene-1984}, Denef shows that this is true on the
condition that the fibers are not only finite, but uniformly bounded.
(The original lemma was stated for semi-algebraic sets, but the same
proof holds for general $P$-minimal structures.) Since uniformity is
guaranteed by model-theoretic boundedness, the lemma follows.
\end{proof}
From this it follows by an easy induction that
\begin{corollary}[Definable Finite Choice] \label{cor:choice}
  Let $f: X \subseteq K^m \to K^n$ be a new definable function. Assume that for
  every $y \in f(X)$, $f^{-1}(y)$ is finite. Then there exists a
  definable function $\sigma: f(X) \to X$, such that 
  \[\big(\sigma\circ f(x), f(x)\big) \in \text{Graph}(f)\] 
  for all $x \in X$.
\end{corollary}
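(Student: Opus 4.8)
The plan is to reduce the statement to a repeated application of the preceding Lemma, stripping off one coordinate of the target at a time.

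First I would recast the problem as a section-finding problem. Consider the definable set
\[
  S:=\big\{(y,x)\in K^n\times K^m\tq (x,y)\in\text{Graph}(f)\big\}.
\]
Its projection onto the first $n$ coordinates is $f(X)$, and for $y\in f(X)$ the fiber of $S$ over $y$ is $\{y\}\times f^{-1}(y)$, which is finite by hypothesis. Finding the desired $\sigma$ is then the same as finding a definable \emph{section} $s\colon f(X)\to K^m$ of this projection, i.e.\ a definable map with $(y,s(y))\in S$ for every $y\in f(X)$: indeed $(y,s(y))\in S$ means exactly that $s(y)\in X$ and $f(s(y))=y$, so such an $s$ takes values in $X$ and, with $\sigma:=s$, satisfies $\big(\sigma\circ f(x),f(x)\big)\in\text{Graph}(f)$ for all $x\in X$.

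Second, writing $\pi_n$ for the projection onto the first $n$ coordinates (of whatever ambient space is at hand), I would prove by induction on $k$ the general statement: \emph{for every definable $S\subseteq K^n\times K^k$ whose fibers over $K^n$ are all finite, $\pi_n\colon S\to\pi_n(S)$ has a definable section.} The case $k=0$ is trivial. For $k\ge1$, let $\pi$ be the projection dropping the last coordinate and set $S':=\pi(S)=\widehat S\subseteq K^n\times K^{k-1}$. Since every fiber of $S$ over a point of $K^n$ is finite, so is every fiber of $S$ over a point of $S'$ along $\pi$; the Lemma — applied to $\pi$, which projects onto the first $n+k-1$ coordinates and so has the form the Lemma requires — provides a definable $h\colon S'\to K$ with $(w,h(w))\in S$ for all $w\in S'$. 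The fibers of $S'$ over $K^n$ are images of the (finite) fibers of $S$, hence finite, so the induction hypothesis yields a definable section $s'$ of $\pi_n\colon S'\to\pi_n(S')$. Since $\pi_n(S')=\pi_n(S)$, the map $y\mapsto\big(s'(y),\,h(y,s'(y))\big)$ is a definable section of $\pi_n\colon S\to\pi_n(S)$. Applying this to the set $S$ of the first paragraph concludes the proof.

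The argument is entirely routine; the only points that require attention are bookkeeping ones — checking that each intermediate projection still has finite fibers (immediate, since a subset or a coordinate image of a finite set is finite), and keeping track of which block of coordinates each projection acts on, so that the Lemma is always invoked in the ``onto the first coordinates'' form in which it is stated. I do not expect any genuine obstacle.
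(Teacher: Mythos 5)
Your proof is correct and follows exactly the route the paper intends: the paper derives the corollary from the preceding lemma ``by an easy induction,'' and your argument --- flipping the graph so the target coordinates come first and then stripping off one source coordinate at a time via the lemma --- is precisely that induction, spelled out. (Only a cosmetic remark: $h(y,s'(y))$ should read $h(s'(y))$ if $s'(y)$ denotes the full point of $S'$, but this is pure bookkeeping.)
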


Using the coordinate projections from Definition \ref{def:dimension},
we will now give a definition of t\--cells and t\--cell decomposition:
\begin{definition}
  A set $C \subseteq K^m$ is a {\df topological cell} (or {\df t\--cell} for
  short) if there exists some (non unique) $I\subseteq\{1,\dots,n\}$ such that
  $\pi_I^m$ induces a homeomorphism from $C$ to a non-empty open set.
\end{definition}
In particular, every non-empty open subset of $K^m$ is a t\--cell, and
the only finite t\--cells in $K^m$ are the points. For any definable
set $X\subseteq K^m$, a {\df t\--cell decomposition} is a partition $\cA$ of
$X$ in finitely many t\--cells. We say that the  t\--cell
decomposition is {\df good}, if moreover each t\--cell in $\cA$ is
either open in $X$ or lacks interior in $X$.

\section{Topological cell decomposition}
\label{se:cell-dec}

Recall that $K$ is a $P$-minimal expansion of a $p$-adically closed
field. We will first show that every set definable in such a structure
admits a decomposition in t\--cells:

\begin{lemma}\label{le:M-cell-dec}
  Every definable set $X\subseteq K^m$ has a good t\--cell decomposition. 
\end{lemma}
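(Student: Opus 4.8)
The plan is to argue by induction on $m$, repeatedly reducing to lower-dimensional ambient spaces; the case $m=0$ is trivial. For the inductive step one first peels off the open part, writing $X=\Int X\sqcup Z$ with $Z:=X\setminus\Int X$: then $\Int X$ is a t\--cell which is open in $X$, while $Z$ has empty interior in $K^m$, so $\dim Z<m$. It thus suffices to treat two sub\--problems: (a) produce a \emph{not necessarily good} t\--cell decomposition of an arbitrary definable set of dimension $<m$ in $K^m$; and (b) upgrade an arbitrary t\--cell decomposition of $X$ into a good one. Three elementary facts will be used freely: an open subset of a t\--cell is again a t\--cell; a coordinate projection $\pi_I$ which restricts to a homeomorphism between definable sets preserves dimension (the source being then a graph over the target, this is a special case of the Additivity Property, Lemma~\ref{le:additivity}); and a composite of coordinate projections is a coordinate projection, so t\--cells pull back to t\--cells through such homeomorphisms.

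For (a), let $Z\subseteq K^m$ be definable with $\dim Z<m$. Project onto the first $m-1$ coordinates and split $Z$ according to whether its fibers are infinite or finite: set $Z(1)=\{x\in\widehat Z\tq Z_x\text{ infinite}\}$, which is definable, and $Z(0)=\widehat Z\setminus Z(1)$. By the Additivity Property, $\dim Z(1)\leqslant\dim Z-1\leqslant m-2$. On the part $Z\cap(Z(1)\times K)$: apply the induction hypothesis to $Z(1)\subseteq K^{m-1}$ to get a good t\--cell decomposition, and for each of its cells $V$, with $\pi_J$ inducing a homeomorphism from $V$ onto an open set $U\subseteq K^s$, note that $s\leqslant\dim V\leqslant m-2$ and that $\pi_{J\cup\{m\}}$ induces a homeomorphism from $V\times K$ onto $U\times K\subseteq K^{s+1}$; transporting $Z\cap(V\times K)$ through it and invoking the induction hypothesis in $K^{s+1}$ (legitimate since $s+1\leqslant m-1$) produces t\--cells which pull back as wanted. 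On the part $Z\cap(Z(0)\times K)$ the fibers are finite, hence uniformly finite by model-theoretic boundedness, so by Definable Finite Choice (Corollary~\ref{cor:choice}), iterated, this part is a finite disjoint union of graphs of definable functions $g_i\colon W_i\to K$ with $W_i\subseteq K^{m-1}$.

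Cutting each such graph into t\--cells is the heart of the matter, and the step I expect to be the main obstacle: it amounts to decomposing $W_i$ into t\--cells on which $g_i$ is continuous. To do this, apply the induction hypothesis to $W_i$ and refine each resulting t\--cell $V$ by an inner induction on $\dim V$ (the base case $\dim V=0$ being a point, where $g_i$ is trivially continuous). If $\pi_J$ induces a homeomorphism from $V$ onto an open $U\subseteq K^s$, transport $g_i|_V$ to a function $\tilde g$ on $U$; by \HM 3, $\dim\bigl(U\setminus\cC(\tilde g)\bigr)<s$. The open set $\cC(\tilde g)$ is a t\--cell, its preimage $V'$ in $V$ is an open subset of $V$ hence a t\--cell, and $g_i$ is continuous on $V'$; the preimage of $U\setminus\cC(\tilde g)$ in $V$ is a definable subset of $K^{m-1}$ of dimension $<\dim V$, which we first decompose into t\--cells via the induction hypothesis and then feed back into the inner induction, the drop in dimension guaranteeing termination. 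Finally, over any t\--cell $V'$ on which $g_i$ is continuous, $x\mapsto(x,g_i(x))$ is a homeomorphism onto the graph, so the graph is a t\--cell witnessed by the same coordinates as $V'$. Assembling the pieces over both fiber types gives a t\--cell decomposition of $Z$, and hence of $X$.

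For (b), given any t\--cell decomposition $\cA$ of $X$, replace each $C\in\cA$ which is neither open in $X$ nor lacking interior in $X$ by $\Int_X C$ together with a t\--cell decomposition of $C\setminus\Int_X C$. Indeed $\Int_X C$ is open in $X$ and, being open in $C$, is a t\--cell; and $C$ is not open in $K^m$ (otherwise it would be open in $X$), so $\dim C<m$, whence the homeomorphism witnessing $C$ identifies $C\setminus\Int_X C$ with a definable subset of $K^{\dim C}$, decomposed into t\--cells by the induction hypothesis; the cells so produced, being contained in $C\setminus\Int_X C$, which has empty interior in $X$, themselves lack interior in $X$. The resulting decomposition is good, which completes the induction.
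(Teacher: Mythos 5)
Your proof is correct, but it is organized quite differently from the paper's. The paper inducts on the pair $(\dim X, e(X))$, where $e(X)$ counts the coordinate subsets $I$ witnessing $\dim X$: it fixes one such $I$, stratifies $\Int\pi_I(X)$ by fiber cardinality, takes sections $\sigma_{i,j}$ via Definable Finite Choice, restricts to their continuity loci $V_i$, and then proves--- via an explicit $\varepsilon$--$\delta$ argument separating the branches $\sigma_{i,k}(y)$ from $\sigma_{i,j}(y_0)$ --- that each image $C_{i,j}=\sigma_{i,j}(V_i)$ is \emph{open in $X$}; goodness is thus built in, and the leftover set projects to something of dimension $<\dim X$, decreasing $(d,e)$. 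You instead induct on the ambient dimension $m$, fiber over the first $m-1$ coordinates, treat the infinite-fiber locus by descending into $K^{s+1}$ through the cell structure of the base, and handle the finite-fiber locus as graphs made continuous by an inner induction on dimension using \HM 3 --- the same toolkit (Definable Finite Choice, model-theoretic boundedness, Additivity, \HM 1, \HM 3) but assembled differently. The most substantive divergence is that you obtain goodness by post-processing: splitting each cell $C$ into $\Int_X C$ (an open subset of a t\--cell, hence a t\--cell open in $X$) and $C\setminus\Int_X C$ (empty interior in $X$, re-decomposed in $K^{\dim C}$ via the witnessing homeomorphism). This cleanly sidesteps the paper's delicate openness argument, which is the technical heart of its proof; the price is an extra layer of recursion, since you peel off one coordinate at a time rather than projecting directly onto $K^{\dim X}$. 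One small imprecision: the inference ``$C$ is not open in $K^m$, so $\dim C<m$'' is false for arbitrary definable sets; it is valid here only because $C$ is a t\--cell, so that a witnessing projection retaining all $m$ coordinates would make $C$ literally open, forcing any non-open t\--cell to be witnessed by a proper coordinate subset and hence to have dimension $<m$. You should say this explicitly, but it does not affect the correctness of the argument.
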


\begin{proof}
Put $d=\dim X$ and let $e=e(X)$ be the number of subsets $I$ of $\{1,\dots,m\}$
for which $\pi_I(X)$ has non-empty interior in $K^d$. The proof goes by
induction on pairs $(d,e)$ (in lexicographic order). The result is
obvious for $d\leqslant 0$ so let us assume that $1\leqslant d$, and that
the result is proved for smaller pairs.

Let $I\subseteq\{1,\dots,m\}$ be such that $\pi_I(X)$ has non-empty interior in $K^d$.
For every $y$ in $\Int\pi_I(X)$, we write $X_y$ for the fiber with
respect to the projection $\pi_I$. For every integer $i\geqslant 1$ let
$W_i$ be the set  \[W_i := \{y\in\Int\pi_I(X) \tq \Card X_y = i\}.\] By
model-theoretic boundedness, there is an integer $N\geqslant 1$ such
that $W_i$ is empty for every $i> N$. We let $\cI$ denote the set of
indices $i$ for which $W_i$ has non-empty interior in $K^d$. 

For each $i\in\cI$, Definable Finite Choice (Corollary~\ref{cor:choice})
induces a definable function \[\sigma_i: =(\sigma_{i,1},\dots,\sigma_{i,i}): \Int W_i \to
K^{mi},\] such that $X_y=\{\sigma_{i,j}(y)\}_j$ for every {$y\in \Int  W_i$}.
Put $V_i: =\cC(\sigma_i)$, and $C_{i,j}:=\sigma_{i,j}(V_i)$. Notice that
$C_{i,j}$ is a t\--cell for every $i\in\cI$ and $j\leqslant i$. Indeed,
the restrictions of $\pi_I$ and $\sigma_{i,j}$ are reciprocal homeomorphisms
between $C_{i,j}$ and the open set $V_i$. We show that each $C_{i,j}$
is open in $X$.

Fix $i \in \mathcal{I}$ and $j \leqslant i$. Let $x_0$ be an element of
$C_{i,j}$ and $y_0=\pi_I(x_0)$, so that $x_0 = \sigma_{i,j}(y_0)$. By
construction, $\bigcup_k C_{i,k}=\pi_I^{-1}(V_i)\cap X$ is open in $X$ (because
$V_i=\cC(\sigma_i)$ is open in $\Int W_i$, hence open in $K^d$). So there is
$\rho\in|K^\times|$ such that $B(x_0,\rho)\cap X$ is contained in $\bigcup_k C_{i,k}$. Let
$\varepsilon$ be defined as 
\begin{displaymath}
  \varepsilon:=\min_{k\neq j}\|\sigma_{i,k}(y_0)-\sigma_{i,j}(y_0)\| = \min_{k\neq
  j}\|\sigma_{i,k}(y_0)-x_0\|
\end{displaymath}
Because $\sigma_i$ is continuous on the open set $V_i$, there exists $\delta$
such that 
\begin{displaymath}
  B(y_0, \delta) \subseteq \sigma_i^{-1}(B(\sigma_i(y_0),\rho)) \subseteq V_i, 
\end{displaymath}
and such that for all $y \in B(y_0, \delta)$, we have that
\begin{displaymath}
  \|\sigma_i(y) - \sigma_i(y_0)\|< \varepsilon. 
\end{displaymath}
Making $\delta$ smaller if necessary, we may assume that $\delta < \min\{\varepsilon, \rho\}$.
We will show  that $B(x_0, \delta) \cap X \subseteq C_{i,j}$. Let  $x$ be in $B(x_0,
\delta) \cap X$, and put $y:= \pi_I(x)$. Since $\delta < \rho$, we know that there must
exist $k$ such that $x = \sigma_{i,k}(y)$. Assume that $k \neq j$. Since $\delta <
\varepsilon$, we now have that 
\begin{eqnarray*}
\| \sigma_{i,k}(y) - \sigma_{i,k}(y_0)\| & \leqslant & \| \sigma_i(y) - \sigma_i(y_0)\|\\
& < & \varepsilon \\
& \leqslant & \| \sigma_{i,j}(y_0) - \sigma_{i,k}(y_0)\| \\
& = & \| \sigma_{i,k}(y) - \sigma_{i,j}(y_0) \|\\
& = & \| \sigma_{i,k}(y) - x_0\|,
\end{eqnarray*}
but this means that $\sigma_{i,k}(y) \not \in B(x_0, \delta)$, and hence we can
conclude that $x = \sigma_{i,j}(y) \in C_{i,j}$.

Given that each $C_{i,j}$ is a t\--cell which is open in $X$, it remains
to show the result for $Z:=X\setminus (\bigcup_{i\in\cI, j\leqslant i} C_{i,j})$. We
will check that $\pi_I(Z)$ has empty interior (in $K^d$), or
equivalently that $\dim\pi_I(Z)< d$. 

Note that $\pi_I(Z)$ is a disjoint union $A_1 \sqcup A_2 \sqcup A_3$, where
$A_1:= \pi_I(X)\setminus\Int\pi_I(X)$, $A_2 := \Int\pi_I(X)\setminus\bigcup_{i\leqslant N}W_i$,
and $A_3$ is the set
\begin{displaymath}
  A_3:=  \bigg(\bigcup_{i\in\cI} \big(W_i\setminus\Int W_i\big)
       \cup \big(\Int W_i\setminus V_i\big)\bigg)
       \cup \bigcup_{i\notin\cI}W_i.
\end{displaymath}
By \HM 1 it suffices to check that each of these parts has dimension
$<d$. Clearly $A_1$ has empty interior, hence dimension $<d$. For
every $y$ in $A_2$, the fiber $X_y$ is infinite, hence $A_2$ must have
dimension $<d$  by the Additivity Property. 

Next, we need to check that $A_3$ also has dimension smaller than $d$.
By \HM 1, it is sufficient to do this for each  part separately. The
set $W_i\setminus \Int W_i$ has empty interior for every $i\in\cI$, and hence
dimension  $<d$.  For $i\in \cI$, $\Int W_i\setminus V_i$ has dimension $<d$ by
\HM 3. And finally, $W_i$ has empty interior for every $i\notin\cI$ by
definition of $\cI$, hence dimension $<d$. So $\dim \pi_I(Z)<d$ by \HM
1, hence $\pi_I(Z)$ has empty interior. 

{A fortiori,} the same holds for $\pi_I(Z_1)$ and $\pi_I(Z_2)$ where
$Z_1:=\Int_X Z$ and $Z_2:=Z\setminus\Int_X Z$. This implies that, for each
$k\in\{1,2\}$, either $\dim Z_k< d$, or $\dim Z_k=d$ and $e(Z_k)<e$.
Hence, the induction hypothesis applies to each $Z_k$ separately and
gives a good partition $(D_{k,l})_{l\leqslant l_k}$ of $Z_k$. Since
$Z_1$ is open in $X$ and $Z_2$ has empty interior in $X$, the sets
$D_{k,l}$ will also be either open in $X$, or have empty interior in
$X$. It follows that the family of consisting of the t\--cells
$C_{i,j}$ and $D_{k,l}$ forms a good t\--cell decomposition of $X$.

\end{proof}
We will now show that this decomposition can be chosen in such a way
as to ensure that definable functions are piecewise continuous, which
is one of the main theorems of this paper.

\begin{theorem}[Topological Cell Decomposition]\label{th:M-cell-prep}
  For every definable function $f$ from $X\subseteq K^m$ to $K^n$ (or to
  $|K|^n$) there exists a good t\--cell decomposition $\cC$ of $X$, such
  that for every $C\in\cC$ the restriction $f_{|C}$ of $f$ to $C$ is
  continuous. 
\end{theorem}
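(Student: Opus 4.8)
The plan is to reduce the statement about an arbitrary definable function $f\colon X\to K^n$ (or $|K|^n$) to the case $n=1$, and then bootstrap the existence of a \emph{good} t\--cell decomposition on which $f$ is continuous from the already established Lemma~\ref{le:M-cell-dec} together with the Haskell--Macpherson continuity facts \HM2 and \HM3. First I would observe that a function into $K^n$ is continuous if and only if each of its $n$ coordinate functions is continuous, and that a function into $|K|^n$ is continuous if and only if each coordinate function (into $|K|$) is; so by an obvious finite-intersection argument it suffices to find, for each coordinate, a good t\--cell decomposition making that coordinate continuous, and then pass to the common refinement --- provided common refinements of good t\--cell decompositions are again good t\--cell decompositions, which I would check (intersecting two t\--cells need not obviously give a t\--cell, so this point deserves a short argument: the relative interior of an intersection inside a t\--cell, and its complement, can be handled by Lemma~\ref{le:M-cell-dec} applied inside each piece).

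Next, for a single definable $f\colon X\subseteq K^m\to K$ (resp.\ $\to|K|$), I would proceed by induction on $\dim X$, mirroring the proof of Lemma~\ref{le:M-cell-dec}. Start with a good t\--cell decomposition $\cA$ of $X$ from Lemma~\ref{le:M-cell-dec}; it suffices to treat each $A\in\cA$ separately, so assume $X=A$ is a single t\--cell. If $A$ lacks interior in its ambient $K^m$ but is a t\--cell, then $\pi_I$ is a homeomorphism from $A$ onto an open set $U\subseteq K^d$ with $d=\dim A<m$; transporting $f$ through this homeomorphism gives a definable function $g=f\circ(\pi_I|_A)^{-1}\colon U\to K$ (resp.\ $|K|$), and by \HM3 (resp.\ \HM2) the set $U\setminus\cC(g)$ has dimension $<d$. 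Pull this back: $A\cap\cC(f\circ\text{id})$ — more precisely the preimage of $\cC(g)$ — is relatively open and dense in $A$, $f$ is continuous there, and the remaining piece has dimension $<d$, so the induction hypothesis applies to it. Re-decompose each piece by Lemma~\ref{le:M-cell-dec} into a good t\--cell decomposition, and note (as in the proof of that lemma) that cells open in the relatively open piece or lacking interior in the low-dimensional piece are again respectively open in $X$ or lacking interior in $X$; assemble everything.

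The case where $A$ is open in $K^m$ is the base of the above and is exactly \HM2/\HM3 with $I=\{1,\dots,m\}$: $\cC(f)$ is definable, open, dense in $A$, and $A\setminus\cC(f)$ has dimension $<m$, so $\cC(f)$ together with a good t\--cell decomposition of $A\setminus\cC(f)$ (from Lemma~\ref{le:M-cell-dec}, whose cells automatically lack interior in $A$ since $A\setminus\cC(f)$ does) finishes it; recurse on the lower-dimensional remainder. The main obstacle I anticipate is the bookkeeping around ``good'': one must be careful that when a cell $C$ is declared open or interior-lacking, this is meant relative to the \emph{original} $X$, not relative to the intermediate subset being re-decomposed, and that these relations are inherited correctly through the inductive peeling — precisely the verification carried out in the last paragraph of the proof of Lemma~\ref{le:M-cell-dec} via the decomposition $Z=\Int_X Z\sqcup(Z\setminus\Int_X Z)$. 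A secondary subtlety is ensuring the common-refinement step for the $n>1$ reduction preserves goodness; both obstacles are of the same administrative nature and I would expect the argument to go through by repeating the $(\dim,e)$-type induction already used, or by invoking Lemma~\ref{le:M-cell-dec} one more time on each intersection piece.
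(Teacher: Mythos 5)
Your proposal is correct and follows essentially the same route as the paper: reduce to $n=1$ coordinate by coordinate (refining via Lemma~\ref{le:M-cell-dec}), peel off the open part of the continuity locus using \HM2/\HM3, and handle lower-dimensional t\--cells by transporting $f$ through the homeomorphism $\pi_I$ onto an open subset of $K^d$ where \HM2/\HM3 give a useful dimension bound, then recurse. The only (immaterial) difference is that you induct on $\dim X$ and recurse on the bad set, whereas the paper inducts on the pair $(m,\dim X)$ and recurses on the transported functions $f\circ\sigma_j$.
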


\begin{proof}
We prove the result for functions $f:X\subseteq K^m\to K^n$, by induction on
pairs $(m,d)$ where $d=\dim X$. Our claim is obviously true if $m=0$
or $d\leqslant 0$, so let us assume that $1\leqslant d\leqslant m$ and
that  the theorem holds for smaller pairs.

Note that it suffices to prove the result  for each coordinate
function $f_i$ of $f:= (f_1,\dots,f_n)$ separately. Indeed,  suppose the
theorem is true for the functions $f_i: X \to K$. This means that, for
each $1\leqslant i\leqslant n$, there exists a good t\--cell
decomposition $\cC_i$ of $X$ adapted to $f_i$. It is then easy, by
means of Lemma~\ref{le:M-cell-dec}, to build a common, finer good
t\--cell decomposition of $X$ having the required property
simultaneously for each $f_i$, and hence for $f$. Thus, we may as well
assume that $n=1$. 

Consider the set $X\setminus\Int\cC(f)$, which can be partitioned as $ A_1 \sqcup
A_2$, where $A_1 := X\setminus\cC(f)$ and $A_2:=\cC(f)\setminus\Int\cC(f)$.  It
follows from \HM 3 that $\dim A_1 <m$. Also, $\dim A_2<m$ since it has
empty interior (inside $K^m$),  and therefore the union,
$X\setminus\Int\cC(f)$, has dimension $<m$ by \HM 1. Hence, by throwing away
$\Int\cC(f)$ if necessary (which is a definable open set contained in
$X$, hence a t\--cell open in $X$ if non-empty), we may assume that
$\dim X<m$. 

Using Lemma~\ref{le:M-cell-dec}, one can obtain a good t\--cell
decomposition $(X_j)_{j\in J}$ of $X$. For each $j\in J$, we get a subset
$I_j$ of $\{1,\dots,m\}$, an open set $U_j \subseteq K^{d_j}$ (with $d_j=\dim
X_j<m$), and a definable map $\sigma_j:U_j\to X_j$. These maps $\sigma_j$ can be
chosen in such a way that $\sigma_j$ and the restriction of $\pi_{I_j}$ to
$X_j$ are reciprocal homeomorphisms. Now apply the induction
hypothesis to each of the functions $f\circ\sigma_j$ to get a good t\--cell
decomposition $\cC_j$ of $X_j$. Putting $\cC=\bigcup_{j\in J}\cC_j$ then
gives the conclusion for $f$. 

The proof  for functions $f:X\subseteq K^m\to|K|^n$ is similar, the main
difference being that one needs to use \HM 2 instead of \HM 3. 
\end{proof}

\begin{remark}\label{re:M-cell-dens}
  With the notation of Theorem~\ref{th:M-cell-prep}, let $U$ be the
  union of the cells in $\cC$ which are open in $X$. Clearly
  $U\subseteq\cC(f)$ and $X\setminus U$ is the union of the other cells in $\cC$, each
  of which lacks interior in $X$. To conclude that $\cC(f)$ is dense
  in $X$, it remains to check that this union still has empty interior
  in $X$. This will be the subject of section~\ref{se:pure-dim}.
\end{remark}

The Topological Cell Decomposition Theorem is a strict analagon of the
Cell Decomposition Property (CDP) considered by Mathews in the more
general context of  $t$-minimal structures. In his paper, Mathews
showed that the CDP holds in general for such structures, if a number
of rather restrictive conditions hold (e.g., he assumes that the
theory of a structure has quantifier elimination), see Theorem 7.1 in
\cite{math-1995}. Because of these restrictions, we could not simply
refer to this general setting for a proof of the CDP for $P$-minimal
structures.

Further results from Mathews' paper justify why proving Theorem
\ref{th:M-cell-prep} is worth the effort.  In Theorem~8.8 of
\cite{math-1995} he shows that, if the CDP and the Exchange Principle
are satisfied for a t\--minimal structure with a Hausdorff topology,
then several classical notions of ranks and dimensions, including $D$
and $\dim$, coincide for its definable sets. Because of
Theorem~\ref{th:M-cell-prep} and \HM 4, we can now apply the
observation from Theorem 8.8 to $P$-minimal fields, to get that

\begin{corollary}\label{co:rk-dim-D}
  For every definable set $A\subseteq K^m$, $\dim A=D(A)$. 
\end{corollary}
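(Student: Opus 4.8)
The plan is to reduce Corollary~\ref{co:rk-dim-D} to two inequalities, $D(A)\leqslant\dim A$ and $D(A)\geqslant\dim A$, and prove each by induction on $\dim A$, using the good t\--cell decomposition supplied by Lemma~\ref{le:M-cell-dec} together with the Additivity Property and the properties \HM1--\HM3.

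For the inequality $D(A)\geqslant\dim A$ one wants to exhibit a long chain of definable sets $A=B_0\gg B_1\gg\cdots\gg B_d$ with $d=\dim A$. The natural source of such a chain is a t\--cell $C\subseteq A$ with $\dim C=\dim A$: picking $I\subseteq\{1,\dots,m\}$ with $\pi_I$ a homeomorphism from $C$ onto an open $U\subseteq K^d$, one transports the problem to $K^d$, where an obvious decreasing chain of sets of decreasing dimension (e.g.\ intersect with a hyperplane, then again, etc.) each lacks interior in the previous one; pulling back along the homeomorphism $\sigma=\pi_{I|C}^{-1}$ preserves the relation $\ll$ since $\sigma$ is a definable homeomorphism and hence transports "empty interior in'' faithfully. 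One then checks that each $B_k$ so produced still lacks interior in $A$ (not just in $C$): this is where one uses that $C$ is \emph{open in $A$} or, if $\dim C<\dim A$ is impossible here since $\dim C=\dim A$, that $C$ has dimension $\dim A$ so $A\setminus C$ has smaller dimension and cannot "fill up'' the interior; combining via \HM1 gives $B_k\ll A$.

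For the reverse inequality $D(A)\leqslant\dim A$ one argues that if $B\ll A$ then $\dim B\leqslant\dim A$, with strict inequality whenever $\dim B=\dim A$ would force $B$ to have nonempty interior in $A$. Concretely, take a good t\--cell decomposition of $A$; the open cells are contained in $\Int_A A=A$ and cover a dense open piece, and a set $B$ with $\dim B=\dim A$ must meet some open cell $C$ in a set of full dimension $\dim C$, hence (again transporting through the homeomorphism $\pi_I$, where full-dimensional definable subsets of an open set have nonempty interior by $P$\--minimality) $B\cap C$ has nonempty interior in $C$, thus in $A$; so $B\ll A$ already forces $\dim B<\dim A$. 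Iterating the definition of $D$ then gives $D(A)\leqslant\dim A$ by induction. Alternatively, and more in the spirit of the surrounding text, one invokes Mathews' Theorem~8.8 of \cite{math-1995}: the Cell Decomposition Property holds for $K$ by Theorem~\ref{th:M-cell-prep}, the Exchange Principle holds by \HM4, and the valuation topology on $K$ is Hausdorff, so that theorem directly yields the coincidence of $D$, $\dim$ and the model-theoretic rank; combined with \HM5 this closes the argument.

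The main obstacle I expect is the bookkeeping in the $D(A)\geqslant\dim A$ direction: one must be careful that passing to a full-dimensional t\--cell $C$ and building the chain \emph{inside} $C$ still yields sets that lack interior in $A$ rather than merely in $C$, which requires knowing that $\dim(A\setminus C)<\dim A$ for a suitable choice of $C$ (available because the open cells of a good decomposition exhaust $A$ up to a subset of strictly smaller dimension, by \HM1 and the Additivity Property) and then combining interiors via \HM1. If instead one routes the proof through Mathews' Theorem~8.8, the only work is verifying its hypotheses, which are exactly Theorem~\ref{th:M-cell-prep}, \HM4, and Hausdorffness --- so in that case there is essentially no obstacle, and the corollary is immediate.
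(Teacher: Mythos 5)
Your ``alternative'' route is in fact the paper's entire proof: Corollary~\ref{co:rk-dim-D} is obtained there simply by invoking Theorem~8.8 of \cite{math-1995}, whose hypotheses (the Cell Decomposition Property, the Exchange Principle, Hausdorffness of the topology) are exactly Theorem~\ref{th:M-cell-prep}, \HM4, and the valuation topology. So that part of your proposal is correct and coincides with the paper.

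Your primary, hands-on argument is fine in the direction $D(A)\geqslant\dim A$, and is in fact simpler than you fear: for definable $B\subseteq C\subseteq A$, if $B$ has non-empty interior in $A$ then it already has non-empty interior in $C$ (an open $O$ with $\emptyset\neq O\cap A\subseteq B$ gives $\emptyset\neq O\cap C\subseteq B$), so a chain built inside a full-dimensional t\--cell transfers to $A$ with no hypothesis on $\dim(A\setminus C)$ --- which is fortunate, since your claim that $\dim C=\dim A$ forces $\dim(A\setminus C)<\dim A$ is false in general. The genuine gap is in the direction $D(A)\leqslant\dim A$, at the step ``a set $B$ with $\dim B=\dim A$ must meet some \emph{open} cell in a set of full dimension''. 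A good t\--cell decomposition only tells you that each non-open cell lacks interior in $A$; at this stage of the paper nothing excludes a non-open cell of dimension $\dim A$, nor shows that the union of the non-open cells has dimension $<\dim A$. That statement is essentially Theorem~\ref{th:dense-int}, which is proved \emph{later} using the Small Boundaries Property, which itself rests on Corollary~\ref{co:rk-dim-D} --- so appealing to it here would be circular. The step can be repaired by using the specific decomposition constructed in the proof of Lemma~\ref{le:M-cell-dec}: choose $I$ with $\pi_I(B)$ of non-empty interior in $K^d$ (then $\pi_I(A)$ has non-empty interior too, so the construction may be run with this $I$); the open cells $C_{i,j}$ produced there project homeomorphically onto open subsets of $K^d$ while the remainder $Z$ satisfies $\dim\pi_I(Z)<d$, so by \HM1 some $\pi_I(B\cap C_{i,j})$ has dimension $d$, hence non-empty interior in $K^d$, and pulling back through the homeomorphism exhibits a non-empty relatively open subset of $A$ inside $B$. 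With that repair your direct proof closes; as written, the safe course is the Mathews citation.
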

The Small Boundaries Property then follows easily. 

\begin{theorem}[Small Boundaries Property]\label{th:dim-boundary}
  For every definable set $A\subseteq K^m$, one has that $\dim(\overline{A}\setminus
  A)<\dim A$. 
\end{theorem}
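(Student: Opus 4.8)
The plan is to argue by induction on $\dim A$, using Corollary~\ref{co:rk-dim-D} (i.e.\ $\dim = D$) to convert the statement into a claim about the rank $D$, which is designed precisely to detect "lacking interior". The base case $\dim A = 0$ is trivial: then $A$ is finite, hence closed, so $\overline{A}\setminus A = \emptyset$ and $\dim(\overline A\setminus A) = -\infty < 0$. So assume $d := \dim A \geqslant 1$ and that the result holds for all definable sets of smaller dimension.

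First I would reduce to the case where $A$ has a particularly nice shape by invoking Lemma~\ref{le:M-cell-dec}: take a good t-cell decomposition $A = C_1 \sqcup \cdots \sqcup C_r$. Since $\overline{A} = \bigcup_k \overline{C_k}$, we have $\overline{A}\setminus A \subseteq \bigcup_k (\overline{C_k}\setminus C_k) \cup \bigcup_{k\neq l}(\overline{C_k}\cap C_l \setminus C_k)$; the cross terms are contained in the various $C_l$, which have dimension $\leqslant d$, but more importantly each point of $\overline A \setminus A$ lies in some $\overline{C_k}\setminus C_k$. By \HM1 it therefore suffices to prove $\dim(\overline{C_k}\setminus C_k) < d$ for each cell $C_k$ with $\dim C_k = d$ (cells of smaller dimension are handled by the induction hypothesis), so we may assume $A = C$ is a single t-cell of dimension $d$. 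Let $I \subseteq \{1,\dots,m\}$ with $|I| = d$ be such that $\pi := \pi_I^m$ restricts to a homeomorphism $C \to U$, with $U := \pi(C)$ open in $K^d$.

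Next I would analyze $\overline{C}\setminus C$ fiber by fiber over $\pi$. Consider the definable set $S := \overline{C}\setminus C \subseteq K^m$ together with the projection $\pi_I$. By the Additivity Property (Lemma~\ref{le:additivity}), it is enough to bound, for each $e \in \{0,1,\dots,m-d\}$, the quantity $\dim(S(e)) + e$, where $S(e) = \{\,y : \dim S_y = e\,\}$. The key point is a case split on the base point $y = \pi_I(x)$ for $x \in S$. If $y \in U$: then $x \notin C$ but $x \in \overline{C}$, and since $\pi^{-1}(y)\cap C = \{\sigma(y)\}$ is a single point (where $\sigma : U \to C$ is the inverse homeomorphism), continuity of $\sigma$ forces a ball around $\sigma(y)$ intersected with $\pi^{-1}(B(y,\delta))$ to lie in $C$; one checks this prevents $x \neq \sigma(y)$ with $\pi_I(x) = y$ from being a limit point of $C$, so in fact no $x \in S$ projects into $U$ — i.e.\ $\pi_I(S) \subseteq \overline{U}\setminus U \cup (\partial$-stuff$)$. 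Hence $\pi_I(S) \subseteq \overline{U}\setminus U$, which has empty interior in $K^d$, so $\dim \pi_I(S) < d$. Now for the Additivity bookkeeping: $\dim S = \dim\bigcup_{e}\bigcup_{y\in S(e)} S_y = \max_e (\dim S(e) + e)$, and each $S(e)$ satisfies $S(e) \subseteq \pi_I(S)$, so $\dim S(e) \leqslant \dim\pi_I(S) \leqslant d-1$. The remaining worry is the contribution of large $e$, i.e.\ fat vertical fibers $S_y$; here I would invoke that $S \subseteq \overline{C}$ and that the "missing" coordinates of $C$ are, on a dense open part of $U$, given by continuous (hence locally bounded) functions via a further application of Lemma~\ref{le:M-cell-dec} / Corollary~\ref{cor:choice} to the complementary coordinates, controlling $\dim S_y$.

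The main obstacle I anticipate is precisely this last point: showing that the vertical fibers $S_y = (\overline{C}\setminus C)_y$ cannot simultaneously be high-dimensional and sit over a large base — in other words, ruling out the scenario where $S(e)$ for $e$ close to $m-d$ has dimension close to $d-1$, making $\dim S(e) + e \geqslant d$. Morally this is forbidden because $\dim C = d$ already caps the total "volume" available, and $\overline{C}\setminus C$ must be genuinely thinner; concretely I expect the cleanest route is to avoid fiberwise analysis of $S$ directly and instead note that $\dim \overline{C} \leqslant \dim C = d$ (closure does not increase dimension — itself a consequence of $\dim = D$, or provable directly), together with the fiber observation $\pi_I(\overline C\setminus C) \subseteq \overline U \setminus U$, and then conclude via the following sharper form of Additivity: if $T \subseteq K^{m}$ is definable with $\dim T \leqslant d$ and $\dim \pi_I(T) < d$, one still cannot immediately conclude $\dim T < d$, so the genuinely needed input is that on $\overline C \setminus C$ the fibers $S_y$ are finite for $y$ outside a set of dimension $< d - (\text{fiber dim})$ — this is where the good-cell structure of $C$ (its missing coordinates are graphs of functions over $U$) does the real work, and I would spend most of the effort making that precise.
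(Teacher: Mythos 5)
Your opening sentence identifies the right tool --- Corollary~\ref{co:rk-dim-D}, the identity $\dim=D$ --- but you then abandon it for a cell-by-cell, fiberwise analysis that you cannot close. The paper's proof is exactly the argument your first sentence promises, and it is three lines long: since $A$ is dense in $\overline{A}$, the set $\overline{A}\setminus A$ has empty interior in $\overline{A}$, i.e.\ $\overline{A}\setminus A\ll\overline{A}$; by the very definition of the rank $D$ this gives $D(\overline{A}\setminus A)<D(\overline{A})$, hence $\dim(\overline{A}\setminus A)<\dim\overline{A}$ by Corollary~\ref{co:rk-dim-D}; finally $\dim\overline{A}=\dim A$ follows from \HM1 applied to $\overline{A}=A\cup(\overline{A}\setminus A)$. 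No induction, no cell decomposition and no projections are needed: the whole point of establishing $\dim=D$ beforehand is that $D$ converts ``lacks interior in'' directly into ``has strictly smaller rank than''.

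The route you actually pursue has a genuine gap, which you candidly flag yourself. After reducing to a single $d$-dimensional t\--cell $C$ with $\pi_I$ a homeomorphism onto an open $U\subseteq K^d$, you correctly show that $\pi_I(\overline{C}\setminus C)\subseteq\overline{U}\setminus U$, so the base of $S:=\overline{C}\setminus C$ has dimension $<d$. But the Additivity Property gives $\dim S=\max_e\big(\dim S(e)+e\big)$, and the bound $\dim S(e)\leqslant d-1$ combined with $e\geqslant 1$ does not yield $\dim S<d$. Nothing you cite controls the fibers $S_y=\overline{C}_y$ for $y$ in the frontier of $U$: a priori these are definable subsets of $K^{m-d}$ of positive dimension. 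Even granting $\dim\overline{C}\leqslant d$ (which is itself not available before this theorem --- it is equivalent to the non-strict version of the statement being proved), Additivity only gives $\dim S(e)+e\leqslant d$, one short of what is needed. The step you defer (``the good-cell structure of $C$ does the real work'') is precisely the hard part and is not a routine application of Corollary~\ref{cor:choice}; closing it would require an argument of comparable depth to the one the paper packages into Corollary~\ref{co:rk-dim-D}. So: correct opening instinct, but the proof as written does not go through.
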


\begin{proof}
First note that $D(\overline{A}\setminus A)<D(\overline{A})$, since
$\overline{A}\setminus A$ has empty interior in $\overline{A}$. This means
that $\dim(\overline{A}\setminus A)<\dim\overline{A}$ by
Corollary~\ref{co:rk-dim-D}. Applying \HM 1, we get that $\dim
\overline{A}=\dim A$, and therefore  $\dim(\overline{A}\setminus A)<\dim A$. 
\end{proof}

\section{Relative interior and pure components}
\label{se:pure-dim}

Given a definable set $A\subseteq K^m$ and $x\in K^m$, let $\dim(A,x)$ denote
the smallest $k\in\NN\cup\{-\infty\}$ for which there exists a ball $B\subseteq K^m$
centered at $a$, such that $\dim A\cap B=k$ (see for
example~\cite{boch-cost-roy-1987}).  Note that $\dim(A,x)=-\infty$ if and
only if $x\notin \overline{A}$. We call this the {\df local dimension} of
$A$ at $x$. $A$ is said to be {\df pure dimensional} if it has the
same local dimension at every point $x\in A$. 

\begin{claim}\label{cl:pure-dim-basic}
  Let $S\subseteq K^m$ be a definable set of pure dimension $d$.
  \begin{enumerate}
    \item\label{it:pur-dim-dense}
      Every definable set dense in $\overline{S}$ has pure dimension
      $d$. 
    \item\label{it:pur-dim-int}
      For every definable set $Z\subseteq S$, $Z$ has empty interior in $S$ if
      and only if $\dim Z<\dim S$.
  \end{enumerate}
\end{claim}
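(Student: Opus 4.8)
The plan is to prove part~(\ref{it:pur-dim-int}) first and then part~(\ref{it:pur-dim-dense}), relying only on the Small Boundaries Property (Theorem~\ref{th:dim-boundary}), the identity $\dim=D$ (Corollary~\ref{co:rk-dim-D}), \HM 1, and one preliminary remark: a non-empty definable set $S$ of pure dimension $d$ satisfies $\dim S=d$. Indeed $\dim S\geqslant\dim(S,x)=d$ for any $x\in S$; conversely, Lemma~\ref{le:M-cell-dec} gives a good t\--cell decomposition of $S$, one of whose cells $C$ satisfies $\dim C=\dim S$ by \HM 1, and since $\pi_I$ maps $C$ homeomorphically onto an open subset of $K^{\dim C}$ every point of $C$ has local dimension $\dim C$ in $C$, hence in $S$; as $S$ is pure dimensional this forces $\dim S=\dim C=d$.

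For part~(\ref{it:pur-dim-int}) we may assume $Z\neq\emptyset$. If $Z$ has empty interior in $S$, that is $Z\ll S$, then the definition of the rank $D$ (applied with the non-empty definable set $Z\ll S$ and $D(Z)\geqslant D(Z)$) yields $D(S)\geqslant D(Z)+1$, which by Corollary~\ref{co:rk-dim-D} says $\dim S>\dim Z$; note that purity is not used here. For the converse I argue contrapositively: if $Z$ has non-empty interior in $S$, choose $z\in\Int_S Z$ and a ball $B$ centered at $z$ with $B\cap S\subseteq Z$; shrinking $B$, which preserves this inclusion, we may assume $\dim(B\cap S)=\dim(S,z)$, which equals $d$ by purity. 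Then $\dim Z\geqslant\dim(B\cap S)=d=\dim S$ by the preliminary remark, so $\dim Z<\dim S$ fails.

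For part~(\ref{it:pur-dim-dense}), let $T$ be definable and dense in $\overline S$, so $T\subseteq\overline S$ and $\overline T=\overline S$. First $\dim T=d$: from $T\subseteq\overline S$ and Theorem~\ref{th:dim-boundary} with \HM 1 we get $\dim T\leqslant\dim\overline S=\dim S=d$, and from $\overline T=\overline S$ together with Theorem~\ref{th:dim-boundary} applied to $T$ we get $\dim T=\dim\overline T=\dim\overline S=d$. It remains to check $\dim(T,x)=d$ for every $x\in T$. Since $\dim(T,x)\leqslant\dim T=d$, fix a ball $B$ centered at $x$; we must show $\dim(T\cap B)\geqslant d$. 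As $x\in T\subseteq\overline S$, the open set $B$ meets $S$ in some point $s$; by the ultrametric inequality there is a ball $B'$ with $s\in B'\subseteq B$, and by purity of $S$ we may shrink $B'$, keeping $B'\subseteq B$, so that $\dim(S\cap B')=\dim(S,s)=d$. Since $B'$ is open and $\overline T=\overline S$, we have $\overline{T\cap B'}\supseteq\overline T\cap B'=\overline S\cap B'\supseteq S\cap B'$, hence $\overline{T\cap B'}\supseteq\overline{S\cap B'}$; applying Theorem~\ref{th:dim-boundary} to $T\cap B'$ and to $S\cap B'$ gives $\dim(T\cap B')=\dim\overline{T\cap B'}\geqslant\dim\overline{S\cap B'}=\dim(S\cap B')=d$. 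As $B'\subseteq B$ this gives $\dim(T\cap B)\geqslant d$, so $\dim(T,x)=d$, and $T$ has pure dimension $d$.

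The argument is light once Corollary~\ref{co:rk-dim-D} and Theorem~\ref{th:dim-boundary} are in hand; the one place that needs care is the local\--dimension bookkeeping in part~(\ref{it:pur-dim-dense}): passing from $x\in T$ to a nearby $s\in S$, realizing $\dim(S,s)$ on a ball $B'$ that genuinely lies inside the prescribed ball $B$, and then transferring the lower bound from $\dim(S\cap B')$ to $\dim(T\cap B')$ via density on the open set $B'$. Tracking which sets are open and invoking the Small Boundaries Property at precisely the right moments is the crux.
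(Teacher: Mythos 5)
Your proof is correct, and although it rests on the same two pillars as the paper's argument --- the Small Boundaries Property and the exploitation of purity on small balls --- it diverges in three respects worth recording. First, you explicitly prove the preliminary fact that a non-empty pure-dimensional set $S$ satisfies $\dim S=d$ (via a good t\--cell decomposition, \HM1 and the homeomorphism onto an open set); the paper uses this fact silently when it asserts $\dim B\cap S=d$. Second, for the direction ``$Z$ has empty interior in $S$ implies $\dim Z<\dim S$'' you invoke $Z\ll S\Rightarrow D(Z)<D(S)$ together with Corollary~\ref{co:rk-dim-D}, where the paper instead observes that $Z\subseteq\overline{S\setminus Z}\setminus(S\setminus Z)$ and applies the Small Boundaries Property directly; both are one-liners, and yours makes it visible that purity plays no role in that direction. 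Third, and most substantively, for part~\ref{it:pur-dim-dense} the paper's proof only treats a definable $X\subseteq S$ dense in $S$, whereas the statement (and its later application to $C_k(A)=\overline{\Delta_k(A)}\cap A$, which need not be contained in $\Delta_k(A)$) concerns sets dense in $\overline S$; your argument --- passing from $x\in T$ to a nearby $s\in S$, realizing the local dimension of $S$ on a ball $B'\subseteq B$, and transferring the lower bound through $\overline{T\cap B'}\supseteq\overline{T}\cap B'=\overline{S}\cap B'\supseteq S\cap B'$ --- handles the stated generality. The cost is a longer computation; the benefit is that your version covers exactly the use the paper later makes of the claim.
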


\begin{proof}
Let $X\subseteq S$ be a definable set dense in $S$. Consider a ball $B$ with
center $x\in X$. Then $B\cap S$ is non-empty, and therefore we have that
$\dim B\cap S=d$. Moreover, it is easy to see that $B\cap X$ is dense in $B\cap
S$, which implies that $\dim B\cap X=d$ as well, by the Small Boundaries
Property and \HM 1. This proves the first part. 

Let us now prove the second point. If $Z$ has empty interior in $S$,
this means that $S\setminus Z$ is dense in $S$, and hence $Z$ is contained in
$\overline{(S\setminus Z)}\setminus (S\setminus Z)$. But then $\dim Z<\dim (S\setminus Z)$ by the
Small Boundaries Property, and therefore $\dim Z<\dim S$. Conversely,
if $Z$ has non-empty interior inside $S$, there exists a ball $B$
centered at a point $z\in Z$ such that $B\cap S\subseteq Z$. By the purity of $S$,
$\dim B\cap S=d$,  and hence $\dim Z\geqslant d$. Since $Z\subseteq S$, this
implies that $\dim Z=d$.  
\end{proof}

\noindent For every positive integer $k$, we put
\[ \Delta_k(A):= \{ a \in A \mid \dim(A,a) =k \},\]
and we write $C_k(A)$ for the topological closure of $\Delta_k(A)$ inside
$A$. It is easy to see that $\Delta_k(A)$ is pure dimensional, and of
dimension $k$ if the set is non-empty. By part~\ref{it:pur-dim-dense}
of Claim~\ref{cl:pure-dim-basic}, the same holds for $C_k(A)$ .
Moreover, since $C_k(A)$ is closed in $A$, one can check that it is
actually the largest definable subset of $A$ with pure dimension $k$
(if it is non-empty). For this reason, we call the sets $C_k(A)$ the
{\df pure dimensional components} of $A$. 

\begin{remark}\label{re:Delta-closure}
  If $\dim(A,x)<k$ for some $x\in A$, then there exists a ball $B$
  centered at $x$ for which $\dim B\cap A<k$. Such a ball must be
  disjoint from $C_k(A)$, because $C_k(A)$ either has pure dimension
  $k$ or is empty. But then $C_k(A)$ is disjoint from every $\Delta_l(A)$
  with $l<k$, which means that it must be contained in the union of
  the $\Delta_l(A)$ with $l\geqslant k$. 
\end{remark}

\begin{lemma}\label{le:dim-CkCl}
  For every definable set $A\subseteq K^m$ and every $k$, one has that
  \begin{displaymath}
    \dim \left(C_k(A)\cap \bigcup_{l\neq k} C_l(A)\right) < k.
  \end{displaymath}
\end{lemma}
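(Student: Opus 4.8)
The plan is to analyze the set $E_k := C_k(A)\cap\bigcup_{l\neq k}C_l(A)$ by splitting according to whether $l<k$ or $l>k$. For the terms with $l<k$: by Remark~\ref{re:Delta-closure}, $C_k(A)$ is disjoint from $\Delta_l(A)$ for every $l<k$, and more is true — since $C_k(A)$ has pure dimension $k$ (or is empty), every point of $C_k(A)$ has a ball $B$ with $\dim(B\cap A)=k$, and such a ball meets $\Delta_l(A)$ for no $l<k$; but a point of $C_l(A)$ with $l<k$ lies in $\overline{\Delta_l(A)}$, so any neighbourhood of it meets $\Delta_l(A)$. Hence actually $C_k(A)\cap C_l(A)=\emptyset$ for $l<k$, and these terms contribute nothing. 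So it suffices to bound $\dim\bigl(C_k(A)\cap\bigcup_{l>k}C_l(A)\bigr)$, and by \HM1 it is enough to bound $\dim\bigl(C_k(A)\cap C_l(A)\bigr)$ for each fixed $l>k$.

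Fix $l>k$ and set $F := C_k(A)\cap C_l(A)$. The key point is that $F$ cannot contain a ball's worth of $C_k(A)$: if $z\in F$, then $z\in C_k(A)$, so there is a ball $B$ centered at $z$ with $\dim(B\cap A)=k$ (using pure dimension $k$ of $C_k(A)$ and part~\ref{it:pur-dim-int} of Claim~\ref{cl:pure-dim-basic}, or directly the definition of local dimension together with the fact that $z\in\overline{\Delta_k(A)}$); shrinking $B$ if necessary we may assume $\dim(B\cap A)\le k$. Since $B\cap C_l(A)\subseteq B\cap A$, this forces $\dim(B\cap C_l(A))\le k<l$. But $C_l(A)$ has pure dimension $l$, so by part~\ref{it:pur-dim-int} of Claim~\ref{cl:pure-dim-basic} the set $B\cap C_l(A)$ has empty interior in $C_l(A)$. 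As this holds for a ball around every point $z\in F$, the set $F$ has empty interior in $C_l(A)$, and therefore $\dim F<\dim C_l(A)=l$ — but this only gives $\dim F<l$, not $\dim F<k$, so a second argument is needed.

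To get the sharper bound, run the symmetric estimate the other way, but now using the $k$-side. Every point $z\in F$ lies in $C_l(A)$, hence in $\overline{\Delta_l(A)}$, so every ball around $z$ meets $\Delta_l(A)$, i.e. meets points of local dimension $l>k$; in particular no ball $B$ around $z$ satisfies $B\cap A\subseteq C_k(A)$ (such a ball would have $\dim(B\cap A)$ witnessed to be $\geq l$ at nearby points while pure dimension $k$ of $C_k(A)$ would force it $\le k$) — more carefully: since $C_k(A)$ has pure dimension $k$ and $z\in C_k(A)$, and since $F\ni z$ with every neighbourhood of $z$ meeting $\Delta_l(A)\subseteq A\setminus C_k(A)$, the point $z$ is not in the interior of $C_k(A)$ relative to $A$... which is automatic and unhelpful. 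The honest route: observe $F\subseteq C_k(A)$ is a definable subset, and show $F$ has empty interior \emph{in $C_k(A)$}; then part~\ref{it:pur-dim-int} of Claim~\ref{cl:pure-dim-basic} applied to the pure $d=k$ set $S=C_k(A)$ gives $\dim F<k$ directly. Emptiness of the interior of $F$ in $C_k(A)$ follows because a ball $B$ around $z\in F$ with $B\cap C_k(A)\subseteq F\subseteq C_l(A)$ would give $\dim(B\cap C_k(A))\le\dim(B\cap C_l(A))$, and the left side equals $k$ by purity of $C_k(A)$ (and part~\ref{it:pur-dim-int}), while the right side is $<l$ as shown above — no contradiction yet. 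The clean contradiction instead comes from: such a $B$ would have $B\cap C_k(A)$ with pure dimension $k$ contained in $C_l(A)$, so $\overline{B\cap C_k(A)}\cap A$ is a pure-dimension-$k$ definable subset of $A$ contained in $C_l(A)\subseteq A$, forcing (by maximality of $C_l(A)$ among pure-dimension-$l$ subsets is irrelevant; rather) every point of $\Delta_k(A)\cap B$ to have local dimension both $k$ in $A$ and, lying in $C_l(A)$, to be a limit of $\Delta_l(A)$-points — contradicting $\dim(B\cap A)=k<l$.

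The main obstacle, as the above makes clear, is bookkeeping the two local-dimension constraints so that they genuinely collide at scale $<k$ rather than merely at scale $<l$; the crux is that a point of $C_k(A)\cap C_l(A)$ with $l>k$ is simultaneously a center of a ball of $A$-dimension $k$ and a limit of points of local dimension $l$, and these are incompatible in any small enough ball — so $C_k(A)\cap C_l(A)$ is nowhere locally all of $C_k(A)$, hence has empty interior in the pure-dimensional set $C_k(A)$, hence has dimension $<k$ by Claim~\ref{cl:pure-dim-basic}\eqref{it:pur-dim-int}. I expect the write-up to isolate this incompatibility as the single lemma-internal step and then assemble the result via \HM1 over the finitely many $l\neq k$.
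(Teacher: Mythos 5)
Your write-up repeatedly conflates two different local dimensions: $\dim(A,x)$, the dimension of $A$ near $x$, and $\dim(C_k(A),x)$, the dimension of $C_k(A)$ near $x$. Pure dimensionality of $C_k(A)$ only says the latter equals $k$; it does \emph{not} provide a ball $B$ around $x\in C_k(A)$ with $\dim(B\cap A)=k$. This sinks several of your intermediate claims. First, the assertion that $C_k(A)\cap C_l(A)=\emptyset$ for $l<k$ is false: take $A\subseteq K^3$ the union of a plane $P$ and a line $L$ meeting $P$ in a single point $p$; then $\Delta_1(A)=L\setminus\{p\}$ and $\Delta_2(A)=P$, so $C_1(A)=L$, $C_2(A)=P$, and $C_2(A)\cap C_1(A)=\{p\}\neq\emptyset$. (That case needs no argument at all: $C_k(A)\cap C_l(A)\subseteq C_l(A)$ has dimension at most $l<k$, which is exactly how the paper dispatches it.) Second, for $l>k$ and $z\in C_k(A)\cap C_l(A)$, your claim that some ball around $z$ satisfies $\dim(B\cap A)=k$ is false: such a $z$ lies in $\overline{\Delta_l(A)}$, so every ball around it meets points of local dimension $l$ and hence has $\dim(B\cap A)\geqslant l>k$. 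The same false premise reappears in your closing ``crux'' paragraph, whose stated incompatibility would prove $C_k(A)\cap C_l(A)=\emptyset$ for $l>k$ --- which again need not hold.

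The one argument in your text that does work is the ``clean contradiction'': if a ball $B$ around $z\in F:=C_k(A)\cap C_l(A)$ had $B\cap C_k(A)\subseteq F\subseteq C_l(A)$, then, since $z\in\overline{\Delta_k(A)}$, the non-empty set $B\cap\Delta_k(A)$ would be contained in $C_l(A)$; but $\Delta_k(A)\cap C_l(A)=\emptyset$ for $l>k$, because a point of $\Delta_k(A)$ admits a ball of $A$-dimension exactly $k$, which cannot meet $\Delta_l(A)$. Hence $F$ has empty interior in the pure $k$-dimensional set $C_k(A)$, and part~\ref{it:pur-dim-int} of Claim~\ref{cl:pure-dim-basic} gives $\dim F<k$. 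Note that this is the paper's argument in light disguise: the disjointness $\Delta_k(A)\cap C_l(A)=\emptyset$ (an instance of Remark~\ref{re:Delta-closure}) says precisely that $F\subseteq C_k(A)\setminus\Delta_k(A)\subseteq\overline{\Delta_k(A)}\setminus\Delta_k(A)$, and the Small Boundaries Property finishes directly, without the detour through relative interiors. I would rewrite the proof around that single containment and delete the false starts.
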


\begin{proof}
By \HM 1, it suffices to check that $\dim C_k(A)\cap C_l(A)<k$ for every
$l\neq k$.  This is obvious when $l<k$,  since in these cases $C_l(A)$
already has dimension $l$ or is empty. Hence we may assume that $l>k$.
Using Remark~\ref{re:Delta-closure}, one gets that
\begin{displaymath}
  C_k(A)\cap C_l(A)\subseteq C_k(A)\cap\bigcup_{i>k}\Delta_i(A) = \bigcup_{i>k}C_k(A)\cap \Delta_i(A).
\end{displaymath}
Using \HM 1 again, it now remains to check that $\dim C_k(A)\cap\Delta_i(A)<k$
whenever $i>k$. But since $\Delta_i(A)$ is disjoint from $\Delta_k(A)$, we find
that $C_k(A)\cap\Delta_i(A)\subseteq C_k(A)\setminus \Delta_k(A)$. This concludes the proof because
of the Small Boundaries Property.
\end{proof}

\begin{lemma}\label{le:Ck-int-vide}
  Let $Z\subseteq A\subseteq K^m$ be definable sets. Then $Z$ has empty interior
  inside $A$ if and only if $\dim Z\cap C_k(A)<k$ for every $k$.
\end{lemma}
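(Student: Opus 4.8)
The plan is to prove both implications separately, using the pure dimensional components $C_k(A)$ and the structural facts established so far (Claim~\ref{cl:pure-dim-basic}, Remark~\ref{re:Delta-closure}, Lemma~\ref{le:dim-CkCl}, and the Small Boundaries Property).

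For the easy direction, suppose $Z$ has empty interior in $A$ and fix $k$. I would like to deduce $\dim Z\cap C_k(A)<k$. The point is that $Z\cap C_k(A)$ has empty interior in $C_k(A)$: indeed if $B$ is a ball with $B\cap C_k(A)\subseteq Z\cap C_k(A)$ and center in $Z\cap C_k(A)$, then since $\bigcup_l C_l(A)$ covers $A$ and, by Lemma~\ref{le:dim-CkCl}, the overlaps $C_k(A)\cap\bigcup_{l\ne k}C_l(A)$ have dimension $<k$, a small enough sub-ball meets $A$ only inside $C_k(A)$ (up to a set of dimension $<k$), forcing $Z$ to have non-empty interior in $A$ — a contradiction. (One has to be slightly careful here: $C_k(A)$ need not be open in $A$, so "interior in $C_k(A)$" and "interior in $A$" are genuinely different; the role of Lemma~\ref{le:dim-CkCl} is precisely to control the discrepancy between the local geometry of $A$ and that of $C_k(A)$ near points of $\Delta_k(A)$.) Granting that $Z\cap C_k(A)$ has empty interior in $C_k(A)$, part~\ref{it:pur-dim-int} of Claim~\ref{cl:pure-dim-basic} applied to the pure-dimensional set $C_k(A)$ gives $\dim Z\cap C_k(A)<\dim C_k(A)=k$ (when $C_k(A)\ne\emptyset$; otherwise the claim is vacuous).

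For the converse, suppose $\dim Z\cap C_k(A)<k$ for every $k$ but, for contradiction, that $Z$ has non-empty interior in $A$: there is a ball $B$ with center $z\in Z$ and $B\cap A\subseteq Z$. Let $d=\dim(A,z)$, so shrinking $B$ we may assume $\dim B\cap A=d$ and $z\in\Delta_d(A)\subseteq C_d(A)$. Then $B\cap A\subseteq Z\cap C_d(A)$ up to the lower-dimensional error coming from $C_d(A)\cap\bigcup_{l\ne d}C_l(A)$ — more precisely $B\cap A$ is the union of $B\cap\Delta_l(A)$ over various $l$, those with $l<d$ contribute dimension $<d$ by the choice of $B$ giving $\dim B\cap A=d$... actually the cleanest route is: $B\cap\Delta_d(A)$ has dimension $d$ (since $z\in\Delta_d(A)$ and by purity of $\Delta_d(A)$ near $z$, using Claim~\ref{cl:pure-dim-basic}), and $B\cap\Delta_d(A)\subseteq Z\cap C_d(A)$, so $\dim Z\cap C_d(A)\geqslant d$, contradicting the hypothesis for $k=d$. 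I would spell out the step $\dim B\cap\Delta_d(A)=d$ carefully: $z\in\overline{\Delta_d(A)}$ forces every ball around $z$ to meet $\Delta_d(A)$, and since $\Delta_d(A)$ is pure of dimension $d$, Claim~\ref{cl:pure-dim-basic}(\ref{it:pur-dim-int}) (or directly the definition of local dimension together with purity) gives that $B\cap\Delta_d(A)$ has dimension exactly $d$.

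The main obstacle, and the step deserving the most care, is the first direction: transferring "empty interior in $A$" to "empty interior in $C_k(A)$", since $C_k(A)$ sits inside $A$ in a possibly complicated way. The device for handling this is Lemma~\ref{le:dim-CkCl}, which ensures that near points of $\Delta_k(A)$ the set $A$ looks, up to dimension $<k$, exactly like $C_k(A)$; combined with the fact (Remark~\ref{re:Delta-closure}) that $C_k(A)\subseteq\bigcup_{l\geqslant k}\Delta_l(A)$ and with \HM1 to discard the lower-dimensional pieces, this lets one convert a would-be interior ball for $Z\cap C_k(A)$ in $C_k(A)$ into a set of full dimension $k$ contained in $Z\cap A$, which by Claim~\ref{cl:pure-dim-basic}(\ref{it:pur-dim-int}) applied to... hmm, here $A$ is not pure, so instead I would argue directly via $D$ or via Corollary~\ref{co:rk-dim-D} that this contradicts emptiness of interior of $Z$ in $A$. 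I expect the bookkeeping of "up to a set of dimension $<k$" to be the only real work; everything else is a direct application of the cited results.
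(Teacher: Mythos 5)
Your second implication (non-empty interior of $Z$ in $A$ implies $\dim Z\cap C_d(A)=d$ for $d=\dim(A,z)$) is correct, and in fact slightly more direct than the paper's argument: observing that $B\cap\Delta_d(A)\subseteq Z\cap C_d(A)$ already has dimension $d$ avoids any auxiliary set. The problem is the other implication, where your argument has a genuine gap. You want to upgrade a ball $B$ centered at $x\in Z\cap C_k(A)$ with $B\cap C_k(A)\subseteq Z$ to a ball witnessing interior of $Z$ in $A$, and you claim a small sub-ball meets $A$ ``only inside $C_k(A)$ up to a set of dimension $<k$''. This fails at the point $x$ itself: nothing prevents $x$ from lying in $C_k(A)\cap C_j(A)$ for some $j>k$, and then \emph{every} ball around $x$ meets $\Delta_j(A)$ in a set of dimension $j>k$ (since $C_j(A)$ is the closure of the pure $j$-dimensional set $\Delta_j(A)$), a set which need not be contained in $Z$. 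Lemma~\ref{le:dim-CkCl} controls only the overlap $C_k(A)\cap C_j(A)$, not $B\cap C_j(A)$. Moreover, even if you did obtain $B'\cap A\subseteq Z\cup E$ with $\dim E<k$, that does not force $Z$ to have interior in $A$: since $A$ is not pure-dimensional, a lower-dimensional definable set can fail to have empty interior-complement in the relevant sense, and ``a union with interior has a piece with interior'' is precisely Theorem~\ref{th:dense-int}, which is \emph{deduced from} this lemma, so invoking anything of that shape here is circular. Your fallback via $D$ or Corollary~\ref{co:rk-dim-D} does not close the gap either: producing a subset of $Z$ of full dimension $k$ inside $B'\cap A$ does not contradict $Z$ having empty interior in $A$.

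The missing device is the set $D_k(A):=A\setminus\bigcup_{l\neq k}C_l(A)$, which the paper introduces for exactly this purpose. Since each $C_l(A)$ is closed in $A$, the set $D_k(A)$ is \emph{open} in $A$ and contained in $\Delta_k(A)$; and it is \emph{dense} in $C_k(A)$ because $C_k(A)\setminus D_k(A)\subseteq C_k(A)\cap\bigcup_{l\neq k}C_l(A)$ has dimension $<k$ by Lemma~\ref{le:dim-CkCl}, so has empty interior in the pure $k$-dimensional set $C_k(A)$ by part~\ref{it:pur-dim-int} of Claim~\ref{cl:pure-dim-basic}. With this in hand one does not argue at the original center $x$ at all: one picks $z'\in D_k(A)\cap B$ by density, then a ball $B'\subseteq B$ around $z'$ with $B'\cap A\subseteq D_k(A)$ by openness, and gets the honest inclusion $B'\cap A\subseteq C_k(A)\cap B\subseteq Z$, with no lower-dimensional error term to discard. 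I would encourage you to rewrite the first direction along these lines; your instinct that this is ``the step deserving the most care'' was right, but the bookkeeping you propose cannot be completed without $D_k(A)$.
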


\begin{proof}
For every $k$, we will consider the set  \[D_k(A)=A\setminus\bigcup_{l\neq k}C_l(A).\]
Clearly, this set is open in $A$ and contained in $\Delta_k(A)$. We claim
that $D_k(A)$ is also dense in $C_k(A)$. Indeed, $C_k(A)$ is either
empty or has pure dimension $k$. The first case is obvious, so assume
that $C_k(A)$ has pure dimension $k$. By part~\ref{it:pur-dim-int} of
Claim~\ref{cl:pure-dim-basic}, it suffices to check that $C_k(A)\setminus
D_k(A)$ has dimension $<k$. But this follows from
Lemma~\ref{le:dim-CkCl}, so our claim holds.

If $Z$ has non-empty interior in $A$,  there exists $z\in Z$ and
$r\in|K^\times|$, such that $B(z,r)\cap A\subseteq Z$. If we put $k:=\dim(A,z)$, then
$z\in \Delta_k(A)$. Since $D_k(A)$ is dense in $\Delta_k(A)$, the set $D_k(A)\cap
B(z,r)$ is non-empty. Pick a point $z'$ in this intersection. Because
$D_k(A)$ is open in $A$, there exists $r'\in|K^\times|$ such that $B(z',r')\cap
A\subseteq D_k(A)$ and $r' \leqslant r$. But then \[ B(z',r')\cap D_k(A) \subseteq
B(z',r)\cap A = B(z,r)\cap A\subseteq Z, \] and $B(z',r')\cap D_k(A)$ is non-empty
since it contains $z'$. This shows that $Z\cap D_k(A)$ has non-empty
interior inside $D_k(A)$.  Since $D_k(A)$ is open in $A$ (and hence in
$C_k(A)$),  $Z\cap D_k(A)$ has non-empty interior inside $ C_k(A)$ as
well. Because $C_k(A)$ is pure dimensional, part~\ref{it:pur-dim-int}
of Claim~\ref{cl:pure-dim-basic} implies that $\dim (Z\cap C_k(A))=k$. 

Conversely, assume that $\dim (Z\cap C_k(A))=k$ for some $k$. By the
Small Boundaries Property, one has that $\dim (C_k(A)\setminus D_k(A))<k$.
From this, we can deduce that $\dim (Z\cap D_k(A))=k$, using \HM 1. The
purity of $D_k(A)$ and part~\ref{it:pur-dim-int} of
Claim~\ref{cl:pure-dim-basic} then imply that $Z\cap D_k(A)$ has
non-empty interior in $D_k(A)$, and hence in $A$ (since $D_k(A)$ is
open in $A$).  A fortiori, $Z$ itself has non-empty interior in $A$.
\end{proof}

We can now prove the results which were the aim of this section. 

\begin{theorem}\label{th:dense-int}
  Let $A_1,\dots,A_r\subseteq A$ be a finite family of definable subsets of $K^m$.
  If their union has non empty interior in $A$ then at least one of
  them has non empty interior in $A$. In particular, a piece $A_i$ has
  non-empty interior in $A$ if $\dim A_i\cap C_k(A)=k$ for some $k$. 
\end{theorem}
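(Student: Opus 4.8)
The plan is to derive this from Lemma~\ref{le:Ck-int-vide}, which characterizes having empty interior in $A$ in terms of the dimensions of the intersections with the pure-dimensional components $C_k(A)$. First I would prove the contrapositive of the main statement: assume that each $A_i$ has empty interior in $A$, and show that $\bigcup_i A_i$ has empty interior in $A$. By Lemma~\ref{le:Ck-int-vide}, the hypothesis means that for every $i$ and every $k$ we have $\dim(A_i\cap C_k(A))<k$. Fix $k$. Then $(\bigcup_i A_i)\cap C_k(A)=\bigcup_i(A_i\cap C_k(A))$, and by \HM1 its dimension is the maximum of the $\dim(A_i\cap C_k(A))$, which is still $<k$. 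Since this holds for every $k$, Lemma~\ref{le:Ck-int-vide} (applied with $Z=\bigcup_i A_i$) tells us that $\bigcup_i A_i$ has empty interior in $A$. Contraposing gives the first assertion.

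For the second ("In particular") assertion, suppose $\dim(A_i\cap C_k(A))=k$ for some $k$. Note first that $\dim(A_i\cap C_k(A))\le\dim C_k(A)=k$ (when $C_k(A)$ is non-empty it has pure dimension $k$), so the hypothesis is really saying this dimension is as large as possible. Now apply Lemma~\ref{le:Ck-int-vide} directly with $Z=A_i$: since there is a $k$ with $\dim(A_i\cap C_k(A))=k$ (hence $\not<k$), the lemma's criterion for empty interior fails, so $A_i$ has non-empty interior in $A$.

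I do not expect any serious obstacle here: the real content has already been packaged into Lemma~\ref{le:Ck-int-vide}, and what remains is an essentially formal manipulation using the finite additivity of $\dim$ over unions (\HM1). The only point requiring a little care is making sure the decomposition $(\bigcup_i A_i)\cap C_k(A)=\bigcup_i(A_i\cap C_k(A))$ is used correctly together with \HM1 — in particular that \HM1 is being applied to finitely many sets, which is exactly our situation since $r$ is finite — and noting the edge case where some $C_k(A)$ is empty, in which case $\dim(Z\cap C_k(A))=-\infty<k$ trivially and that value of $k$ contributes nothing.
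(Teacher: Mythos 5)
Your proposal is correct and follows essentially the same route as the paper: both reduce the statement to Lemma~\ref{le:Ck-int-vide} and use \HM1 to pass between the union and its pieces (the paper argues directly rather than by contraposition, but this is an immaterial difference). The handling of the ``in particular'' clause and of the empty-$C_k(A)$ edge case is also fine.
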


\begin{proof}
If $Z:=A_1\cup\cdots\cup A_r$ has non-empty interior in $A$, then $\dim (Z\cap
C_k(A))=k$ for some $k$ by Lemma~\ref{le:Ck-int-vide}. Then by \HM 1,
$\dim (A_i\cap C_k(A))=k$ for some $i$ and some $k$, and thus $A_i$ has
non-empty interior in $A$ by Lemma~\ref{le:Ck-int-vide}.
\end{proof}

\begin{theorem}\label{th:dense-cont}
  Every definable function $f$ from $X\subseteq K^m$ to $K^n$ (resp. $|K|^n$)
  is continuous on a definable set $U$ which is dense and open in $X$, and
  $\dim (X\setminus U)<\dim X$.
\end{theorem}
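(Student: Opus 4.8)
The plan is to combine the Topological Cell Decomposition Theorem (Theorem~\ref{th:M-cell-prep}) with Theorem~\ref{th:dense-int} to control the bad set. First I would apply Theorem~\ref{th:M-cell-prep} to $f$ to obtain a good t\--cell decomposition $\cC$ of $X$ such that $f_{|C}$ is continuous for every $C\in\cC$. Let $U$ be the union of those cells $C\in\cC$ that are open in $X$ (as already noted in Remark~\ref{re:M-cell-dens}). Then $U$ is open in $X$ and $U\subseteq\cC(f)$; since $f$ is continuous on a neighbourhood of each point of $U$, continuity of $f$ on $U$ is immediate. It remains to show that $U$ is dense in $X$, equivalently that $X\setminus U$ has empty interior in $X$. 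By construction $X\setminus U$ is the union of the finitely many cells $C\in\cC$ that are not open in $X$; by goodness of the decomposition, each such cell lacks interior in $X$, i.e. has empty interior in $X$.

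The key point is that a finite union of definable subsets of $X$, each with empty interior in $X$, again has empty interior in $X$. This is exactly the contrapositive of Theorem~\ref{th:dense-int}: if the union $X\setminus U$ had non-empty interior in $X$, then one of the cells composing it would have non-empty interior in $X$, contradicting goodness. Hence $X\setminus U$ has empty interior in $X$, so $U$ is dense and open in $X$.

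Finally, for the dimension statement $\dim(X\setminus U)<\dim X$, I would argue as follows: $X\setminus U$ has empty interior in $X$, so by Corollary~\ref{co:rk-dim-D} (that $\dim$ coincides with the rank $D$) together with the definition of $D$, we get $D(X\setminus U)<D(X)$, hence $\dim(X\setminus U)<\dim X$. Alternatively, since $U$ is dense and open in $X$, $X\setminus U$ is contained in $\overline{U}\setminus U$, so the Small Boundaries Property (Theorem~\ref{th:dim-boundary}) gives $\dim(X\setminus U)<\dim U\leqslant\dim X$, using \HM1 and $\dim X=\dim\overline U$ if needed. The case of functions into $|K|^n$ is handled identically, using the $|K|^n$ version of Theorem~\ref{th:M-cell-prep}.

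I do not expect any real obstacle here: the heavy lifting has been done in Theorem~\ref{th:M-cell-prep} and Theorem~\ref{th:dense-int}, and this final statement is essentially a one-paragraph assembly of those two results, exactly as foreshadowed in Remark~\ref{re:M-cell-dens}. The only mild subtlety is making sure that ``dense'' is correctly identified with ``complement has empty interior'' and that a finite union of nowhere-dense (in $X$) definable sets is nowhere dense in $X$ — but that is precisely the content of Theorem~\ref{th:dense-int}, so there is nothing left to prove.
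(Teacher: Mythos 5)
Your proposal is correct and follows exactly the paper's own argument: $U$ is the union of the open cells of the good t\--cell decomposition from Theorem~\ref{th:M-cell-prep}, its density follows from Theorem~\ref{th:dense-int} applied to the remaining cells (as foreshadowed in Remark~\ref{re:M-cell-dens}), and the dimension bound comes from the Small Boundaries Property. Both of your suggested routes for the final dimension inequality are valid, and the second (via $X\setminus U\subseteq\overline{U}\setminus U$) is the one the paper intends.
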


\begin{proof}
The existence of $U$, dense and open in $X$ on which $f$ is
continuous, follows from Theorems~\ref{th:M-cell-prep} and
\ref{th:dense-int} by Remark~\ref{re:M-cell-dens}. That $\dim (X\setminus
U)<\dim X$ then follows from the Small Boundaries Property.
\end{proof}

%%fakesection

\bibliographystyle{alpha}
% \bibliography{biblio}

\begin{thebibliography}{}

\end{thebibliography}


\begin{thebibliography}{BCR87}

\bibitem[BCR87]{boch-cost-roy-1987}
Jacek Bochnak, Michel Coste, and Marie-Fran{\c{c}}oise Roy.
\newblock {\em G\'eom\'etrie alg\'ebrique r\'eelle}, volume~12 of {\em
  Ergebnisse der Mathematik und ihrer Grenzgebiete (3)}.
\newblock Springer-Verlag, Berlin, 1987.

\bibitem[CKL]{cubi-leen-2015}
Pablo Cubides-Kovacsics and Eva Leenknegt.
\newblock Integration and cell decomposition in {$P$}-minimal structures.
\newblock To appear in the Journal of Symbolic Logic.

\bibitem[CL07]{cluc-loes-2007}
Raf Cluckers and Fran{\c{c}}ois Loeser.
\newblock b-minimality.
\newblock {\em J. Math. Log.}, 7(2):195--227, 2007.

\bibitem[Clu04]{cluc-2004}
Raf Cluckers.
\newblock Analytic {$p$}-adic cell decomposition and integrals.
\newblock {\em Trans. Amer. Math. Soc.}, 356(4):1489--1499, 2004.

\bibitem[Coh69]{cohe-1969}
Paul~J. Cohen.
\newblock Decision procedures for real and {$p$}-adic fields.
\newblock {\em Comm. Pure Appl. Math.}, 22:131--151, 1969.

\bibitem[Den84]{dene-1984}
Jan Denef.
\newblock The rationality of the {P}oincar\'e series associated to the
  {$p$}-adic points on a variety.
\newblock {\em Invent. Math.}, 77(1):1--23, 1984.

\bibitem[Den86]{dene-1986}
Jan Denef.
\newblock {$p$}-adic semi-algebraic sets and cell decomposition.
\newblock {\em J. Reine Angew. Math.}, 369:154--166, 1986.

\bibitem[DH15]{darn-halu-2015-tmp}
Luck Darni{\`e}re and Immanuel Halupczok.
\newblock Cell decomposition and dimension theory in $p$-optimal fields.
\newblock arXiv:1412.2571v2, 2015.

\bibitem[HM97]{hask-macp-1997}
Deirdre Haskell and Dugald Macpherson.
\newblock A version of o-minimality for the {$p$}-adics.
\newblock {\em J. Symbolic Logic}, 62(4):1075--1092, 1997.

\bibitem[KL14]{kuij-leen-2015}
Tristan Kuijpers and Eva Leenknegt.
\newblock Differentiation in {$P$}-minimal structures and a {$p$}-adic local
  monotonicity theorem.
\newblock {\em J. Symbolic Logic}, 79(4):1133--1147, 2014.

\bibitem[Mat95]{math-1995}
Larry Mathews.
\newblock Cell decomposition and dimension functions in first-order topological
  structures.
\newblock {\em Proc. London Math. Soc. (3)}, 70(1):1--32, 1995.

\bibitem[Mou09]{mour-2009}
Marie-H{\'e}l{\`e}ne Mourgues.
\newblock Cell decomposition for {$P$}-minimal fields.
\newblock {\em MLQ Math. Log. Q.}, 55(5):487--492, 2009.

\bibitem[Pas90]{pas-1990}
Johan Pas.
\newblock Cell decomposition and local zeta functions in a tower of unramified
  extensions of a {$p$}-adic field.
\newblock {\em Proc. London Math. Soc. (3)}, 60(1), 1990.

\bibitem[vdD89]{drie-1989}
Lou van~den Dries.
\newblock Dimension of definable sets, algebraic boundedness and {H}enselian
  fields.
\newblock {\em Ann. Pure Appl. Logic}, 45(2):189--209, 1989.
\newblock Stability in model theory, II (Trento, 1987).

\bibitem[vdD98]{drie-1998}
Lou van~den Dries.
\newblock {\em Tame topology and o-minimal structures}, volume 248 of {\em
  London Mathematical Society Lecture Note Series}.
\newblock Cambridge University Press, Cambridge, 1998.

\end{thebibliography}

\end{document}